\DeclareMathOperator{\Sign}{Sign}
\def\R{\mathbb{R}}
\def\vad{\vadjust{\kern-5pt}}
\def\a{\alpha}
    \def\F{\Phi}
 \def\ex{\exists\,}  
\def\dx{\dot x}     
   \def\p{\psi}
\def\le{\leqslant}  \def\ge {\geqslant}
\def\q{\quad}  \def\qq{\qquad}
\def\dis{\displaystyle} 
     \def\ms{\medskip}   \def\ssk{\smallskip}
\def\beq{\begin{equation}\label}  \def\eeq{\end{equation}}
\def\bth{\begin{theorem}\label}   \def\eth{\end{theorem}}
\def\ble{\begin{lemma}\label}     \def\ele{\end{lemma}}
\def\begar{\begin{array}} \def\endar{\end{array}}
\renewcommand*{\@fnsymbol}[1]{\ensuremath{\ifcase#1\or *\or \ddagger\or
   \mathsection\or \mathparagraph\or \|\or **\or \dagger\dagger
   \or \ddagger\ddagger \else\@ctrerr\fi}}
\begin{document}
\title{Classification of extremals in a simplified Goddard model on the maximal height of rocket flight}

\author[1]{Andrei Dmitruk\thanks{dmitruk@member.ams.org}}
\author[2]{Ivan Samylovskiy\thanks{ivan.samylovskiy@cs.msu.ru}}

\affil[1]{Central Economics and Mathematics Institute of the 					  Russian Academy of Sciences}
\affil[2]{Lomonosov Moscow State University, Faculty of 						  Computational Mathematics and Cybernetics}

\renewcommand\Authands{ and }

\newtheorem{theorem}{Theorem}[section]
\newtheorem{prop}{Proposition}[section]
\newtheorem{rem}{Remark}[section]
\newtheorem{definition}{Definition}[section]
\newtheorem{Note}{Note}[section]
\newtheorem{lemma}{Lemma}[section]
\maketitle

\begin{abstract}
We consider a problem on maximizing the height of vertical flight of
a material point (``meteorological rocket'') in the presence of a nonlinear
friction and a constant flat gravity field under a bounded thrust and fuel
expenditure. The original Goddard problem is simplified by removing the
dependence on the rocket mass from the equations of motion. Using the
maximum principle we find all possible types of Pontryagin extremals and
classify them w.r.t. problem parameters. Since the velocity of the point
can be negative, we obtain some new types of extremals with two or three
switching points, which optimality should be further investigated.

\end{abstract}

\section{Introduction}

Consider the following optimal control problem:
\begin{equation}\label{probl}
    \left\{
    \begin{aligned}
        &\dot s= x,&&\quad s(0) = 0,&& \; s(T) \to \max, \\
        &\dot x= u-\varphi(x) - g,&&\quad x(0) = 0,&& \; x(t)\;\text{ is free},\\
        &\dot m= -u,&& \quad m(0)=m_0\,,&& \; m(T) \geqslant m_T\,,\\
        & 0\leqslant u \leqslant 1.
    \end{aligned}
    \right.
\end{equation}

Here $s(t)$ and $x(t)$ are one-dimensional position and velocity of a vehicle,
$m(t)$ describes the total mass of vehicle's body and fuel, $u(t)$ is the
rate of fuel expenditure, $g$ is a constant gravity force and $\varphi(x)$ is a function describing
the ``friction'' (media resistance) depending on the velocity. We assume that:
\begin{enumerate}[-]
\item $\varphi(0) = 0$,
\item $\varphi'(x)>0$ for all $x,$
\item $\varphi(x)$ is twice smooth for $x \neq 0$,
\item $\varphi''(x)<0$ for all $x<0$ and $\varphi''(x)>0$ for all $x>0$,
\end{enumerate}
which, in particular, implies that $\varphi(x)$ works on decreasing the
absolute value of the speed $|x|.$

%\begin{figure}[h]
%\center{$\includegraphics[width=0.4\linewidth]{phi}$}
%\caption{Graph of $\varphi(x)$}
%\label{phi}
%\end{figure}

This object can be considered as a material point moving vertically and
being forced by a nonnegative bounded thrust. Our aim is to maximize the
distance passed by the object in a given time $T$ under a fuel limitation.
Here $m_T\in ]0,m_0[$ is the mass of ``empty'' vehicle without fuel.
\ssk

This problem can be considered as a simplification of the classical
Goddard problem on maximizing the height of vertical flight of a
``meteorological rocket'' \cite{Goddard}, where the change of the objects's
mass is not taken into account in the equation for acceleration (i.e. $u$
is the thrust force divided by mass $m$).

The Goddard problem has a long history and was investigated both by the calculus of variations (see, for example, \cite{Leitmann1}, \cite{Leitmann2}) and the optimal control (see, for example, \cite{Leitmann3}, \cite{Bonnans1}, \cite{Martinon}, \cite{Petit2}) methods.

Our modification differs from the classical setting in the following aspects. On the one hand, as mentioned above, we do not take into account the reducing of mass in the equation for velocity, which simplifies the dynamics. On the other hand, we admit the presence of the gravity force together with a general nonlinear resistance and do not require the velocity to be nonnegative. To our knowledge, such a statement was not yet considered in the literature. It allows one to describe analytically all possible types of optimal trajectories, while in the original Goddard problem, in its full generality, it is hardly possible, and so, even qualitative properties of optimal trajectories are obtained by using numerical calculations (see, e.g. \cite{Bonnans1}, \cite{Martinon}).

\section{Preliminaries}
Note that since the admissible control set in problem (1) is convex and
compact, and the dynamics is linear in $u,$ the classical Filippov theorem
guarantees that a solution (an optimal trajectory) always exists.

To find an optimal trajectory, we, first of all, establish some properties of the control system
\begin{equation}\label{controlSystem}
\dot x = u - \varphi(x) - g, \qq 0 \leqslant u \leqslant 1.
\end{equation}

Define $x_{min} < 0$ and $x_{max} > 0$ from the conditions
\beq{x-min-max}
-\varphi(x_{min})-g=0, \qq 1-\varphi(x_{max}) - g = 0,
\eeq
respectively.\, Since $\varphi(x)$ strictly increases, these values
are unique.\, Moreover, the following proposition is true:

\begin{lemma}\label{xdot}\,
If $x_0\in\left]x_{min}, x_{max}\right[,$ then for any trajectory of
system (\ref{controlSystem}) we have $x(t)\in ]x_{min},x_{max}[$ for all
$t\ge 0.$

If $u=1$ ($u=0$) on a time interval $]t_1,t_2[,$ then $\dot x(t)> 0$
($<0,$ respectively) on this interval.

%\begin{figure}[h]\label{dotXFig}
%\begin{minipage}[h]{0.32\linewidth}
%\center{$\includegraphics[width=\linewidth]{xMinxMax2}$}
%\center{a: $g < 0.5$}
%\end{minipage}
%\hfill
%\begin{minipage}[h]{0.32\linewidth}
%\center{$\includegraphics[width=\linewidth]{xMinxMax3}$}
%\center{b: $g = 0.5$}
%\end{minipage}
%\begin{minipage}[h]{0.32\linewidth}
%\center{$\includegraphics[width=\linewidth]{xMinxMax1}$}
%\center{c: $g > 0.5$}
%\end{minipage}
%\hfill
%%\vfill
%\caption{``Work region'' for $x(t)$}
%\label{inopt1}
%\end{figure}

\end{lemma}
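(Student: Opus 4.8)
The plan is to treat $x_{min}$ and $x_{max}$ as the equilibria of the two extreme autonomous systems obtained by freezing the control at its bounds, and to use these equilibria as one-sided barriers. Write $f_0(x) = -\varphi(x) - g$ and $f_1(x) = 1 - \varphi(x) - g$ for the right-hand sides corresponding to $u = 0$ and $u = 1$. By the very definition \eqref{x-min-max} of $x_{min}$ and $x_{max}$ we have $f_0(x_{min}) = 0$ and $f_1(x_{max}) = 0$, and since $\varphi$ strictly increases, $f_0(x) < 0$ for $x > x_{min}$ while $f_1(x) > 0$ for $x < x_{max}$. Moreover, because $0 \le u \le 1$, every admissible trajectory satisfies the differential inequalities $f_0(x) \le \dot x \le f_1(x)$.

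First I would establish the invariance of $]x_{min},x_{max}[$ by a comparison argument. Let $x(\cdot)$ be a trajectory with $x(0) = x_0 \in\ ]x_{min}, x_{max}[$, and let $y(\cdot)$ solve the autonomous equation $\dot y = f_1(y)$ with $y(0) = x_0$. Since $\varphi$ is $C^1$ everywhere (the assumptions give $\varphi'(0) > 0$, so $\varphi$ is locally Lipschitz even across $x = 0$), the field $f_1$ is locally Lipschitz and $y(\cdot)$ is unique; in particular $y(\cdot)$ cannot cross the equilibrium $x_{max}$ in finite time, so $y(t) < x_{max}$ for all $t \ge 0$. The classical comparison theorem applied to $\dot x \le f_1(x)$ then yields $x(t) \le y(t) < x_{max}$. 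Symmetrically, comparing with the solution $z(\cdot)$ of $\dot z = f_0(z)$, $z(0) = x_0$, which satisfies $z(t) > x_{min}$ for all $t$, gives $x(t) \ge z(t) > x_{min}$. Hence $x(t)\in\ ]x_{min},x_{max}[$ for all $t \ge 0$.

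The second statement then follows immediately from the first together with the strict monotonicity of $\varphi$. On an interval $]t_1,t_2[$ where $u \equiv 1$ we have, by the invariance just proved, $x(t) < x_{max}$, hence $\varphi(x(t)) < \varphi(x_{max})$ and $\dot x(t) = 1 - \varphi(x(t)) - g > 1 - \varphi(x_{max}) - g = 0$. Likewise, on an interval where $u \equiv 0$ we have $x(t) > x_{min}$, so $\dot x(t) = -\varphi(x(t)) - g < -\varphi(x_{min}) - g = 0$.

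The only delicate point I anticipate is the barrier step: one must argue that the comparison solution $y$ (resp. $z$) stays strictly below $x_{max}$ (resp. strictly above $x_{min}$) rather than reaching the equilibrium. This is exactly where uniqueness for the frozen-control ODE is essential, since a solution starting on one side of an equilibrium of an autonomous scalar equation can only approach it asymptotically, and it is where the mild nonsmoothness of $\varphi$ at $x = 0$ must be checked; because $\varphi'(0) > 0$, the field remains locally Lipschitz and this causes no trouble. Everything else is routine sign bookkeeping based on the strict monotonicity of $\varphi$.
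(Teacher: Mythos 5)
Your proof is correct and follows essentially the same route as the paper: you compare the trajectory with the frozen-control solutions of $\dot y = f_1(y)$ and $\dot z = f_0(z)$, use uniqueness to show these cannot cross the equilibria $x_{max}$ and $x_{min}$ (the paper phrases this as integral curves of the same equation not intersecting, and invokes the Chaplygin comparison theorem), and then read off the sign of $\dot x$ from the strict monotonicity of $\varphi$. Your extra care about the mild nonsmoothness of $\varphi$ at $x=0$ is a welcome refinement the paper leaves implicit, but it changes nothing in substance.
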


\begin{proof} \,
Set $u \equiv 1$ in the first equation of (\ref{controlSystem}) and note
that $x^{*}(t) \equiv x_{max}$ is its solution. Take any initial value
$x_1 < x_{max}$ and consider the solution $x'(t)$ of this equation for
$u \equiv 1$ with $x'(0) = x_0\,.$ Since the integral curves of the
same equation do not intersect, $x'(t) < x_{max}$ for all $t \ge 0\,.$

Now, consider any control $u(t)\in[0,1]$ and the corresponding $x(t)$
with the same $x(0) =x_0\,.$  According to Chyaplygin comparison theorem,
we obtain $x(t) \le x'(t) < x_{max}$ for $t \ge 0\,.$
Similarly, if $u \equiv 0,$ the corresponding solution $x''(t)$ satisfies
the relations $x(t) \ge x''(t) > x_{min}$ for $t \ge 0\,.$

These inequalities, in view of (\ref{x-min-max}), imply that $\dx(t) >0$
if $u=1,$ and  $\dx(t) <0$ if $u=0.$ %\ctd
\end{proof}

%%----------------------------------------------------------
\section{Maximum principle for problem (\ref{probl})}

Let $s(t),\, x(t),\, m(t),\, u(t),\; t\in [0,T]$ be an optimal process.
According to the Pontryagin Maximum Principle (MP), there exist constants $\left(\alpha_0,\alpha,\beta_s,\beta_x,\beta_m\right),$ not all identically zero,
 and Lipschitz functions $\psi_s(t),\, \psi_x(t), \psi_m(t),$ that generate
{\it the endpoint Lagrange function}
\begin{equation}\label{l}
l=-\alpha_0 s(T)-\alpha (m(T)-m_T)+\beta_s s(0)+\beta_x x(0)+\beta_m (m(0)-m_0)
\end{equation}
and {\it the Pontryagin function}
\begin{equation} \label{H}
H(s,x,m,u)=\; \psi_s\, x + \psi_x\,(u - \varphi(x) - g)-\;\psi_m\,u,
\end{equation}
such that the following conditions are satisfied:

\begin{enumerate}[(a)]

\item nonnegativity condition: $\;\alpha_0\geqslant0,\;\;\alpha\geqslant 0,$
\ssk

\item nontriviality condition: $\;(\alpha_0,\alpha,\beta_s,\beta_x,\beta_m)\neq (0,0,0,0,0)$,
\ssk

\item
complementarity slackness condition:  \vad
\begin{equation}\label{KKT}
\alpha\, (m(T)-m_T)=0,
\end{equation}

\item costate (adjoint) equations
\begin{equation} \label{adjointSystem}
\begin{cases}
\begin{aligned}
-\dot\psi_s &= H_s =\; 0, \\
-\dot\psi_x &= H_x =\; \psi_s-\psi_x\varphi'(x),\\
-\dot\psi_m &= H_m =\; 0,
\end{aligned}
\end{cases}
\end{equation}

\item transversality conditions:
\begin{equation}\label{transvers}
\begin{cases}
\begin{aligned}
\psi_s(0)&=\beta_s\,,&&\q \psi_s(T)=\alpha_0\,,\\
\psi_x(0)&=\beta_x\,,&&\q \psi_x(T)=0\,,\\
\psi_m(0)&=\beta_m\,,&&\q \psi_m(T)=\alpha\,,
\end{aligned}
\end{cases}
\end{equation} \ssk

\item
the ``energy conservation law'': $\; H(s,x,m,u) \equiv const,$
\ssk

\item
and the maximality condition: \quad for almost all $t$  \vad
\begin{equation} \label{max}
\max_{0\le u'\le 1}\, H(s(t),x(t),m(t),u') = H(s(t),x(t),m(t),u(t)).
\end{equation}
\end{enumerate}  \ssk

\noindent
According to (\ref{adjointSystem})--(\ref{transvers}), in order to simplify
further computations we set $\psi_s\equiv \alpha_0,$ $\psi_m\equiv \alpha,$
and write $\psi(t)$ instead of $\psi_x(t).$ Then the maximality condition
(\ref{max}) gives the optimal control in the form  \vad
\begin{equation}\label{uOpt}
u(t)\in \Sign^+(\psi-\alpha),
\end{equation}
where $\Sign^+$ is the set-valued function
\begin{equation*}
\Sign^+(z)=
\begin{cases}
\begin{aligned}
&\left\lbrace 1\right\rbrace,&z>0,\\
&[0,1],&z=0,\\
&\left\lbrace 0\right\rbrace,&z<0,
\end{aligned}
\end{cases}
\end{equation*}
and the costate $\psi(t)$ is determined by the equation  \vad
\begin{equation} \label{adjointEq}
\dot\psi =\; -\alpha_0+\psi\,\varphi'(x) \qquad
\end{equation}
with the terminal condition $\,\psi(T) =0.$
\ssk

By the assumptions,  $\Delta m:=m_0-m_T>0.$ If $\Delta m\ge T,$ then
the optimal control is obvious: $u\equiv 1$ (full thrust).\,
So, in further considerations we assume that \vad
\begin{equation}\label{FeasibleM}
0< \Delta m < T.
\end{equation}

%%-------------------------------------------------------------
\section{Analysis of the maximum principle}

First, we show that the abnormal case $\alpha_0 = 0$ is impossible.
Suppose $\alpha_0=0.$ Then equation (\ref{adjointSystem}) for $\psi(t)$
restricts to a homogeneous one, and condition $\psi(T)=0$ yields $\psi(t)\equiv 0.$
Hence $\beta_x=0$ by (\ref{transvers}), and nontriviality condition gives
$\alpha > 0.$ Then (\ref{uOpt}) yields \mbox{$u(t)\equiv 0,$}
and from equations (\ref{probl}) we have $m(t)=const=m_0,$ which contradicts
complementarity slackness condition (\ref{KKT}). Hence, $\alpha_0>0$
and we may take $\alpha_0=1.\;$ Thus, equation (\ref{adjointEq}) reads
\begin{equation}\label{adjointEq1}
\dot\psi =\; -1+\psi\,\varphi'(x).
\end{equation}

Note that $\dot \psi (t)$ is a continuous function.

\begin{prop}\label{psigreaterzero}
$\; \psi(t)>0$ for all $t<T.$
\end{prop}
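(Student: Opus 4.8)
The plan is to integrate the linear adjoint equation (\ref{adjointEq1}) backward from $t=T$ and exhibit $\psi(t)$ as a manifestly positive quantity. First I would rewrite (\ref{adjointEq1}) in the form $\dot\psi-\varphi'(x(t))\,\psi=-1$ and introduce the integrating factor $\mu(t)=\exp\left(-\int_0^t\varphi'(x(s))\,ds\right)$, which is well defined and strictly positive: by Lemma \ref{xdot} the trajectory $x(t)$ stays in the bounded interval $]x_{min},x_{max}[$ (recall $x(0)=0$ lies in it), and there $\varphi'$ is continuous, hence bounded and integrable along the trajectory, so the exponent is finite.

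Multiplying the equation by $\mu$ gives $\frac{d}{dt}\left(\mu\psi\right)=-\mu$. Integrating from $t$ to $T$ and using the terminal condition $\psi(T)=0$ then yields
\[
\psi(t)=\frac{1}{\mu(t)}\int_t^T\mu(\tau)\,d\tau=\int_t^T\exp\left(-\int_t^\tau\varphi'(x(s))\,ds\right)d\tau.
\]
The integrand is everywhere strictly positive, so for each $t<T$ the integral over the nondegenerate interval $[t,T]$ is strictly positive, which gives exactly $\psi(t)>0$; as $t\to T$ the interval degenerates and the integral tends to $0$, consistent with $\psi(T)=0$.

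An alternative, purely qualitative route avoids the explicit formula and may be cleaner to present. The key observation is that at any instant where $\psi$ vanishes the term $\psi\,\varphi'(x)$ drops out, so (\ref{adjointEq1}) forces $\dot\psi=-1<0$ there; in particular $\dot\psi(T)=-1$, so $\psi$ is strictly positive just to the left of $T$. To upgrade this local statement to a global one I would argue by contradiction: assuming the set $\{t<T:\psi(t)\le 0\}$ is nonempty, put $\tau=\sup\{t<T:\psi(t)\le 0\}$. By continuity of $\psi$ one gets $\psi(\tau)=0$ and $\psi>0$ on $]\tau,T[$, which forces the right derivative $\dot\psi(\tau)\ge 0$; but the observation above gives $\dot\psi(\tau)=-1$, a contradiction. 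Hence the set is empty and $\psi(t)>0$ for all $t<T$.

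The only point requiring genuine care — and the place where the hypotheses on $\varphi$ enter — is the justification that $\varphi'(x(t))$ is integrable, so that $\mu$ is well defined in the first approach; this is precisely what Lemma \ref{xdot} supplies by confining $x(t)$ to a compact range on which $\varphi'$ is bounded. The contradiction argument sidesteps even this, relying only on the continuity of $\dot\psi$ (already noted in the text) together with the vanishing of $\psi\,\varphi'(x)$ at a zero of $\psi$; I would therefore expect it to be the more robust presentation, with the explicit integral kept as a remark since it records useful positivity with a formula.
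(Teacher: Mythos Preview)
Your contradiction argument (the ``alternative, purely qualitative route'') is exactly the paper's proof: observe $\dot\psi=-1$ at any zero of $\psi$, in particular at $T$, so $\psi>0$ just left of $T$; then the last zero $t'<T$ would again have $\dot\psi(t')=-1$, contradicting $\psi>0$ on $]t',T[$. The explicit integrating-factor formula you give first is a correct alternative that the paper does not use; it has the advantage of producing a closed expression for $\psi$ that could be reused later, at the cost of invoking Lemma~\ref{xdot} to guarantee boundedness of $\varphi'(x(t))$, whereas the contradiction argument needs nothing beyond the adjoint equation and the terminal condition.
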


\begin{proof}\,
According to (\ref{adjointEq1}), $\dot \psi(T)= -1,$ and we know that
$\p(T) =0.$ Then $\psi(t)>0$ in a left neighborhood of $T.$ Suppose there
exists $t'<T$ such that $\psi(t')=0$ and $\psi(t)>0$ on $(t',T).$
From (\ref{adjointEq1}) we again have $\dot \psi(t')=-1,$ which contradicts
the previous inequality. %\ctd
\end{proof}

\begin{prop} \label{alpha}
$\;\; \alpha>0.$
\end{prop}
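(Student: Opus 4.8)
The plan is to argue by contradiction and to exploit the positivity of $\psi$ from Proposition \ref{psigreaterzero} together with the standing feasibility restriction (\ref{FeasibleM}). Since the nonnegativity condition already gives $\alpha \ge 0$, it suffices to exclude the value $\alpha = 0$. So I would begin by assuming $\alpha = 0$ and aim to derive a violation of the admissibility of the optimal process.

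Under this assumption the switching function becomes $\psi(t) - \alpha = \psi(t)$, which by Proposition \ref{psigreaterzero} is strictly positive on $[0,T)$. The maximality condition in the form (\ref{uOpt}) then forces $u(t) \in \Sign^+(\psi(t)) = \{1\}$ for almost all $t$, so that $u \equiv 1$ on $[0,T]$. This step is unambiguous precisely because $\Sign^+$ is single-valued away from zero, so no measure-zero indeterminacy can arise.

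Integrating the mass equation $\dot m = -u$ would then give $m(T) = m_0 - T$. Using $\Delta m = m_0 - m_T$ and the assumption $\Delta m < T$ from (\ref{FeasibleM}), this yields $m(T) = m_0 - T < m_0 - \Delta m = m_T$, which contradicts the terminal constraint $m(T) \ge m_T$ in problem (\ref{probl}). Hence $\alpha \ne 0$, and combined with $\alpha \ge 0$ we conclude $\alpha > 0$.

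I do not expect a genuine obstacle here, as the proof is a short chain of implications. The only point that deserves attention is recognizing that the contradiction rests on the feasibility hypothesis (\ref{FeasibleM}) rather than on the complementary slackness relation (\ref{KKT}): full thrust $u \equiv 1$ simply burns more fuel than is available, so it cannot correspond to an admissible, let alone optimal, trajectory. It is worth contrasting this with the earlier exclusion of the abnormal case $\alpha_0 = 0$, where the analogous conclusion was instead obtained through (\ref{KKT}).
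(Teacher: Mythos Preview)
Your proof is correct and follows essentially the same route as the paper: assume $\alpha=0$, use Proposition~\ref{psigreaterzero} together with (\ref{uOpt}) to force $u\equiv 1$, and then derive a contradiction with the fuel restriction (\ref{FeasibleM}). The only cosmetic difference is that the paper phrases the contradiction as ``$\Delta m = T$'' (i.e.\ full thrust would require $\Delta m \ge T$), whereas you spell it out via $m(T)=m_0-T<m_T$; the content is identical.
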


\begin{proof} \,
Suppose that $\alpha=0.$ Then from (\ref{uOpt}) and proposition (\ref{psigreaterzero}),
we obtain $u\equiv 1$ for a.a. $t.$ Hence $\Delta m=T,$ which contradicts
(\ref{FeasibleM}). %\ctd
\end{proof}

From this proposition it follows that $\ex t_2 \in (0,T)$ such that
$\psi(t) < \alpha$ (and then $u=0$ by (\ref{uOpt}))\, for all $t>t_2\,.$
Moreover, since $\alpha>0,$ condition (\ref{KKT}) gives $m(T)=m_T\,,$ and
hence  \vad
\begin{equation}\label{uIneq}
\int_0^T u\, dt \; =\; \Delta m\; >0.
\end{equation}

\begin{definition}
To define conveniently the control function, we use the notation
$$u = (u_1,\, u_2,\, \ldots)\;\; \mbox{ on }\;\; (\Delta_1,\, \Delta_2,\, \ldots),
$$
where $\Delta_1,$ $\Delta_2\,,$ etc., are some intervals, if $u(t) = u_1$
on $\Delta_1,\; $ $u(t) = u_2$ on $\Delta_2\,,$ etc.
\end{definition}

\begin{rem}
\normalfont
If the friction function is linear: $\varphi(x)=\gamma x$ $(\gamma>0),$ the analysis
of MP is quite simple. Then (\ref{adjointEq1}) determines
$\psi(t)=\left(1-e^{\gamma(t-T)}\right)/\gamma$ which is positive on $[0,T]$
and decreases monotonically from $\psi(0)>0$ to $\psi(T)=0$. In this case
condition (\ref{uOpt}) implies that the optimal control always has a bang-bang
form $u=(1,0)$ on $\left(]0,\Delta m[,\,]\Delta m,T[\right).$ Such a case is not interesting, and this is why we assume that $\varphi(x)$ is strictly convex for $x>0$ and strictly concave for $x<0.$
\end{rem}

Now, define the set $M=\left\{ t:\; \psi(t)=\alpha \right\}.$
Obviously, $M$ is closed. Moreover, it is not empty (otherwise $\psi<\alpha$
on $(0,T),$ hence $u\equiv 0,$ which contradicts (\ref{uIneq})).

%%-------------------------
\section*{Some properties of $x(t)$ and $\psi(t)$ related to the set $M$}

\begin{lemma}\label{xProps}
The following facts take place: \ssk

1) Let some $t'<t''<T$ be such that $\psi(t')=\psi(t'')=\alpha$ and
$\psi(t)>\alpha$ on $]t',t''[.$ Then $x(t')<0$ and $x(t'')\le -x(t').$
\ssk

2) Let some $t'<t''$ be such that $\psi(t')=\psi(t'')=\alpha$ and
$\psi(t)<\alpha$ on $]t',t''[.$ Then either $x(t')<0$ or we have two relations:
$x(t')>0$ and $x(t'')\le -x(t').$
\end{lemma}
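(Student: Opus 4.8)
My plan is to run everything through the energy conservation law (f) recast in terms of the single auxiliary function
\[
F(x):=x-\alpha\varphi(x),\qquad F''(x)=-\alpha\varphi''(x),
\]
so that $F$ is strictly convex on $]{-\infty},0[$, strictly concave on $]0,\infty[$, and $F(0)=0$. In case 1) the control is $u=1$ on $]t',t''[$ (since $\psi>\alpha$), so $H=x-\alpha+\psi\,(1-\varphi(x)-g)$. As $H\equiv const$ and $\psi(t')=\psi(t'')=\alpha$, evaluating $H$ at the two endpoints gives $F(x(t'))=F(x(t''))=H+\alpha g=:L$. Solving $H=x-\alpha+\psi\,(1-\varphi(x)-g)$ for $\psi$ and using $1-\varphi(x)-g=\dot x>0$ on the arc (Lemma~\ref{xdot}), I obtain the clean equivalence $\psi>\alpha \iff F(x)<L$. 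Hence the hypothesis ``$\psi>\alpha$ on $]t',t''[$'' says exactly that $F(x)<L$ for every $x$ strictly between $x(t')$ and $x(t'')$, while $x$ increases monotonically from $x(t')$ to $x(t'')$.

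The claim $x(t')<0$ then follows by a chord argument. If $x(t')\ge 0$, the whole arc lies in $[0,\infty[$, where $F$ is concave; since $F$ takes the same value $L$ at both endpoints, concavity forces $F\ge L$ on the connecting chord, i.e. on all of $[x(t'),x(t'')]$, contradicting $F<L$ in the interior. Therefore $x(t')<0$. I would also record the endpoint derivative information, which is consistent with this picture: $\dot\psi(t')\ge 0$ and $\dot\psi(t'')\le 0$ translate through $\dot\psi=-1+\psi\varphi'(x)$ into $\varphi'(x(t'))\ge 1/\alpha\ge\varphi'(x(t''))$, equivalently $F'(x(t'))\le 0\le F'(x(t''))$.

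For the comparison $x(t'')\le -x(t')$, set $p=x(t')<0$. If $x(t'')\le 0$ the inequality is automatic, so assume $q:=x(t'')>0$. Because $F(0)=0<L$ (as $0$ lies in the interior), we have $L>0$, and the arc cannot cross the maximizer of $F$ without violating $F<L$, so $F$ is increasing on $[0,q]$. I would argue by contradiction: if $q>-p$, then $-p\in[0,q)$, where $F$ is increasing, giving $F(-p)<F(q)=L=F(p)$. Thus it suffices to prove the \emph{reflection inequality}
\[
F(-p)\ \ge\ F(p),\qquad\text{i.e.}\qquad \alpha\bigl(\varphi(-p)-\varphi(p)\bigr)\ \le\ -2p .
\]
This is the step I expect to be the main obstacle. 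It cannot be read off from the endpoint data alone: it must combine the concavity of $\varphi$ on $]{-\infty},0[$ with its convexity on $]0,\infty[$ and crucially use that $p$ is pushed far to the left by the two constraints $F(p)=L$ and $F<L$ on the interior. Concretely I would write $\varphi(-p)-\varphi(p)=\int_{p}^{-p}\varphi'$ and try to bound this integral using the curvature split of $\varphi'$ together with $\varphi'(p)\ge 1/\alpha$; alternatively I would study the monotonicity of $x\mapsto F(x)-F(-x)$ along the level set $\{F=L\}$. This is the delicate, non-routine heart of the argument, and I would expect to have to exploit the dip condition $F<L$ (not merely $F(p)=F(q)$) to close it.

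Part 2) I would handle in complete parallel. On a $u=0$ arc $\psi<\alpha$ gives $\dot x=-\varphi(x)-g<0$, so $x$ \emph{decreases} from $x(t')$ to $x(t'')$; the same computation of $H$ yields again $F(x(t'))=F(x(t''))=:L$ together with $\psi<\alpha\iff F(x)<L$ between the endpoints. The concavity chord argument now excludes $x(t')$ and $x(t'')$ being simultaneously $\ge 0$, which produces the stated dichotomy, and in the remaining subcase $x(t')>0$ the bound $x(t'')\le -x(t')$ reduces to the mirror image of the same reflection inequality, so the obstacle is identical.
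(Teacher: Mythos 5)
Your Hamiltonian reduction is set up correctly: on a $u=1$ arc one indeed has $H=x-\alpha+\psi\,(1-\varphi(x)-g)$, the endpoint evaluations give $F(x(t'))=F(x(t''))=L$ with $F(x)=x-\alpha\varphi(x)$, the equivalence $\psi>\alpha\iff F(x)<L$ is valid because $1-\varphi(x)-g=\dot x>0$ there, and your chord argument from the concavity of $F$ on $[0,\infty[$ is a correct, self-contained proof of $x(t')<0$ (likewise the exclusion step in part 2). But the proposal does not prove the lemma: the inequality $x(t'')\le -x(t')$ is reduced to the reflection inequality $F(-p)\ge F(p)$ and left open, as you yourself say. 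Worse, the program you sketch for closing it cannot succeed, because the data you permit yourself --- $F(p)=F(q)=L$, the dip $F<L$ on $]p,q[$, and even the slope information $F'(p)\le 0\le F'(q)$ that you recorded and set aside --- are jointly consistent with $q>-p$ under the paper's four bullet assumptions on $\varphi$. Concretely, take $\varphi(x)=\tfrac12 x^2+\tfrac12 x$ for $x\ge 0$ and $\varphi(x)=-4x^2+\tfrac12 x$ for $x<0$ (all assumptions hold), $\alpha=1$, $p=-\tfrac14$, $q=0.495$: then $F(p)=F(q)=0.125$, $F<0.125$ strictly on $]p,q[$ (convexity on the negative side, $F'>0$ up to $x^\sim=\tfrac12$ on the positive side), $F'(p)<0<F'(q)$, yet $q>-p$ and $F(-p)=0.09375<F(p)$. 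So the reflection inequality is false at this level of generality, and no exploitation of the dip condition alone will rescue it; that is a genuine gap, and it recurs verbatim in your part 2.

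For comparison, the paper's proof does not go through the Hamiltonian at all: from $\dot\psi(t')\ge 0$, $\dot\psi(t'')\le 0$ and $\dot\psi=-1+\psi\varphi'(x)$ at the level $\psi=\alpha$ it extracts $\varphi'(x(t''))\le 1/\alpha\le\varphi'(x(t'))$ --- exactly the content of your parked remark $F'(x(t'))\le 0\le F'(x(t''))$ --- and then concludes \emph{both} assertions at once from the shape of $\varphi'$ (decreasing on $]-\infty,0[$, increasing on $]0,\infty[$): if $x(t')\ge 0$ the comparison fails outright, and if $x(t'')>-x(t')>0$ then $\varphi'(x(t''))>\varphi'(-x(t'))$, which contradicts the comparison provided $\varphi'(-x(t'))\ge\varphi'(x(t'))$. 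Note that this last, tacit step is a mirror-monotonicity property of $\varphi'$ that holds with equality for the class the paper actually computes with later ($\varphi'(x)=b|x|+k$ in the quadratic Section 7, where $\varphi'$ is even), but --- as the example above shows --- is not a consequence of the stated assumptions; your counredexample-resistant instinct that the step is ``the delicate heart'' is accurate. In short: your energy/chord route to $x(t')<0$ is correct and genuinely different from the paper's slope argument, but for $x(t'')\le-x(t')$ the intended mechanism is the one-line comparison of $\varphi'$ at the endpoints (using the slope conditions you discarded, plus the evenness-type property of $\varphi'$), and the reflection inequality you substituted for it cannot be derived from your premises.
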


%\begin{figure}[h]\label{derivphi}
%\center{$\includegraphics[width=0.3\linewidth]{phideriv}$}
%\caption{Graph of $\varphi^{\prime}(x)$}
%\end{figure}

\begin{proof} \, Case 1)\, From the conditions it follows that
$\dot \psi(t')\ge 0,$ $\dot \psi (t'') \le 0,$ and from equation (\ref{adjointEq1})
we obtain $\varphi'\left(x\left(t''\right)\right) \le
\varphi'\left(x\left(t'\right)\right),$ hence $x(t'') > x(t'),$
since $u=1$ and $x$ increases on $(t',t'')$ by Lemma~\ref{xdot}.

Then from the properties of $\varphi'(x)$ it follows immediately that
$x(t')<0$ and $x(t'')\leqslant -x(t').$

Case 2) is studied analogically.  %\ctd
\end{proof}

Using Lemma \ref{xdot}, the following properties of the adjoint variable
can be easily stated:

\begin{lemma}\label{psiProps}
The following behaviour of $\psi(t)$ is impossible for the optimal trajectory:

1) There exists $t'<T$ such that $x(t') < 0,$ $\psi(t')=\alpha,$
$\dot \psi(t') = 0$ and $\psi(t)>\alpha$ (see Fig. \ref{inopt1}a)
or $\psi(t)<\alpha$ (Fig. \ref{inopt1}b) in a right neighborhood
of $t'$.  \ssk

2) There exists $0<t''$ such that $x(t')\leqslant 0,$ $\psi(t'')=\alpha,$
$\dot \psi(t'') = 0$ and $\psi(t)>\alpha$ (see Fig. \ref{inopt2}a)
or $\psi(t)<\alpha$ (Fig. \ref{inopt2}b) in a left neighborhood of
$t''$.
\end{lemma}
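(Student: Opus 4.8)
The plan is to handle all four configurations by the same local analysis at the tangency instant. Write $\tau$ for the time in question ($t'$ in part~1, $t''$ in part~2), so that $\psi(\tau)=\alpha$ and $\dot\psi(\tau)=0$. The starting observation is that, by the adjoint equation~(\ref{adjointEq1}),
\begin{equation*}
0=\dot\psi(\tau)=-1+\psi(\tau)\,\varphi'(x(\tau))=-1+\alpha\,\varphi'(x(\tau)),
\end{equation*}
so that $\varphi'(x(\tau))=1/\alpha$. Next, the assumed one-sided position of $\psi$ relative to $\alpha$ fixes the control on the corresponding one-sided neighbourhood of $\tau$: by~(\ref{uOpt}) we have $u=1$ where $\psi>\alpha$ and $u=0$ where $\psi<\alpha$, and then Lemma~\ref{xdot} fixes the sign of $\dot x$ there ($\dot x>0$ when $u=1$, $\dot x<0$ when $u=0$). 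The aim is to show that the resulting one-sided curvature of $\psi$ at $\tau$ is incompatible with $\psi$ staying on one fixed side of $\alpha$.

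The main computation is second order. Differentiating~(\ref{adjointEq1}) and using $\dot\psi(\tau)=0$ gives the one-sided value
\begin{equation*}
\ddot\psi(\tau)=\dot\psi(\tau)\,\varphi'(x(\tau))+\psi(\tau)\,\varphi''(x(\tau))\,\dot x(\tau)=\alpha\,\varphi''(x(\tau))\,\dot x(\tau).
\end{equation*}
Since $\psi(\tau)=\alpha$ and $\dot\psi(\tau)=0$, a one-sided Taylor expansion $\psi(t)=\alpha+\tfrac12\ddot\psi(\tau)(t-\tau)^2+o\!\left((t-\tau)^2\right)$ shows that $\psi>\alpha$ on the neighbourhood forces $\ddot\psi(\tau)\ge 0$, while $\psi<\alpha$ forces $\ddot\psi(\tau)\le 0$. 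When $x(\tau)<0$ — which always holds in part~1, and holds in part~2 away from the borderline — we have $\varphi''(x(\tau))<0$ by hypothesis, so the sign of $\ddot\psi(\tau)$ is opposite to that of $\dot x(\tau)$. Each of the four cases now contradicts itself: if $\psi>\alpha$ then $u=1$, $\dot x>0$ and hence $\ddot\psi(\tau)<0$; if $\psi<\alpha$ then $u=0$, $\dot x<0$ and hence $\ddot\psi(\tau)>0$. Both violate the inequalities just derived.

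The delicate point, and the only one the second-order test misses, is the borderline $x(\tau)=0$ permitted in part~2, where $\varphi''$ is undefined. For the subcase $\psi>\alpha$ one can still argue to first order: then $u=1$, so $x$ increases towards $x(\tau)\le 0$ and thus $x(t)<0$ on the left neighbourhood, and the strict monotonicity of $\varphi'$ on $(-\infty,0]$ (a consequence of $\varphi''<0$ there, with $\varphi'(0)=1/\alpha$) gives $\varphi'(x(t))>1/\alpha$; together with $\psi(t)>\alpha$ this yields $\dot\psi(t)=\psi(t)\varphi'(x(t))-1>0$, so $\psi$ is strictly increasing and cannot stay above its value $\alpha$ at $\tau$. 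I expect the genuinely hard corner to be the remaining borderline subcase, $\psi<\alpha$ with $x(\tau)=0$: there $u=0$ drives $x$ onto the positive half-line, where $\varphi''>0$, so the two first-order deviations $\psi-\alpha<0$ and $\varphi'(x)-1/\alpha>0$ no longer cooperate in the sign of $\dot\psi$, and the second-order test is inconclusive. Treating it cleanly requires a finer argument — a one-sided expansion involving $\varphi''(0^+)$ together with $\dot x(\tau)=-g$, or extra global information about the optimal trajectory — and this is the step I would expect to demand the most care.
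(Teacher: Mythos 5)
Your proposal is essentially the paper's own proof: the paper likewise writes $\ddot{\psi}=\dot\psi\,\varphi'(x)+\psi\,\varphi''(x)\,\dot x$ (its equation (\ref{psiSecondDeriv})), fixes $u$ and hence the sign of $\dot x$ on the relevant one-sided neighborhood via (\ref{uOpt}) and Lemma \ref{xdot}, and concludes from the sign of $\varphi''$ that the one-sided curvature of $\psi$ at the tangency point is incompatible with $\psi$ staying on the assumed side of $\alpha$; it spells out case 1a and dismisses 1b, 2a, 2b as ``similar,'' whereas you make the Taylor-expansion step and the identity $\varphi'(x(\tau))=1/\alpha$ explicit. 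The one corner you leave open --- $\psi<\alpha$ to the left of $t''$ with $x(t'')=0$ --- is in fact outside the lemma's claim: the caption of Fig.~\ref{inopt2}b restricts case 2b to $x(t'')<0$ (the ``$\leqslant$'' in the statement, besides the typo $x(t')$ for $x(t'')$, is meant only for case 2a), and your diagnosis of why the local test fails there is exactly right --- with $u=0$ the trajectory approaches $x=0$ from the region $x>0$, where $\varphi''>0$ gives $\ddot\psi<0$, and a concave $\psi$ rising to $\alpha$ from below is perfectly self-consistent, which is presumably why the paper states 2b with strict inequality. Your first-order treatment of the borderline $x(t'')=0$ in case 2a (monotonicity of $\varphi'$ on the negative half-line together with $\varphi'(0)=1/\alpha$, yielding $\dot\psi>0$ on the left neighborhood) is a detail the paper's ``analyzed similarly'' glosses over, and it is correct and genuinely needed for the ``$\leqslant$'' in Fig.~\ref{inopt2}a. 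So, modulo reading the scope of case 2b off the figures rather than the (imprecise) statement text, your proof is complete and matches the paper's argument.
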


%\vspace*{3\baselineskip}
\begin{figure}[h!]
\begin{minipage}[h]{0.35\linewidth}
\centering{$\includegraphics[width=\linewidth]{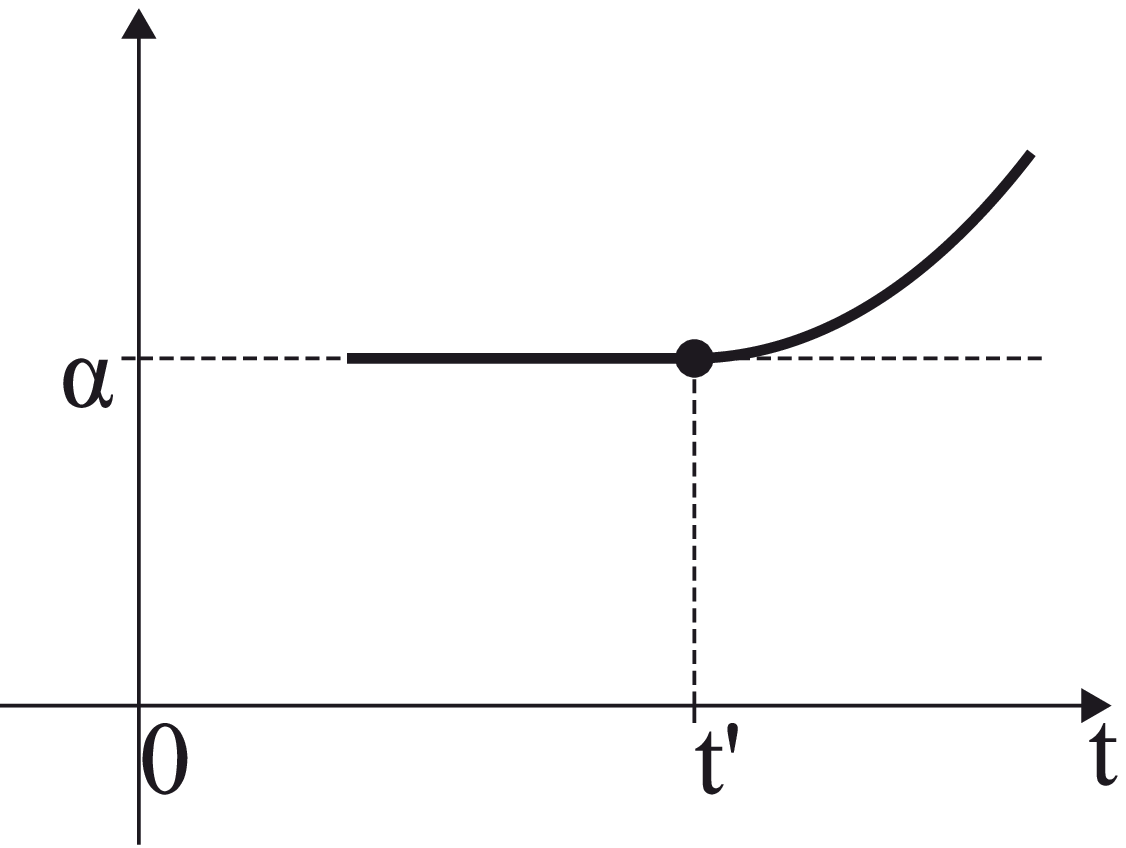}$}
\center{a: $x(t')< 0$}
\end{minipage}
\hfill
\begin{minipage}[h]{0.35\linewidth}
\centering{$\includegraphics[width=\linewidth]{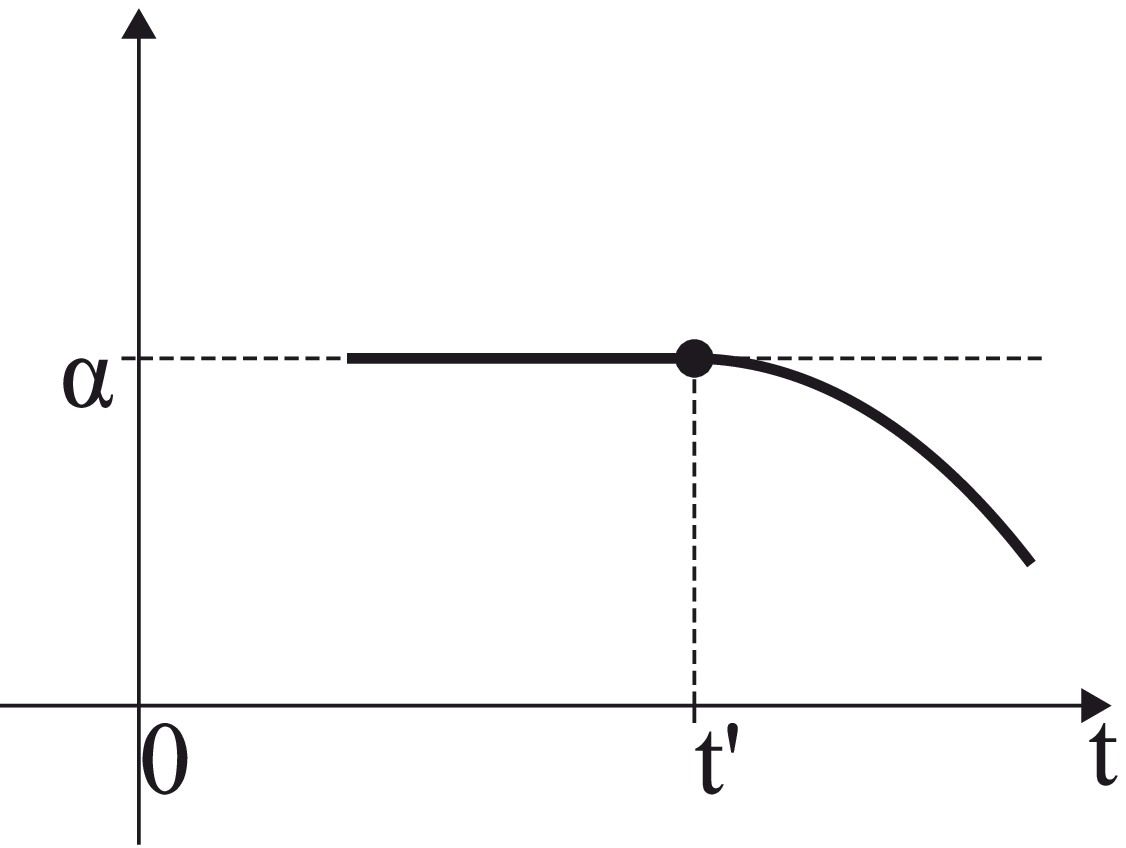}$}
\center{b: $x(t')\le 0$}
\end{minipage}
%\vfill
\caption{Starting from level $\varphi(t')=\alpha$ with $\dot \varphi(t')=0$}
\label{inopt1}
\end{figure}

\begin{figure}[h!]
\begin{minipage}[h]{0.35\linewidth}
\centering{$\includegraphics[width=\linewidth]{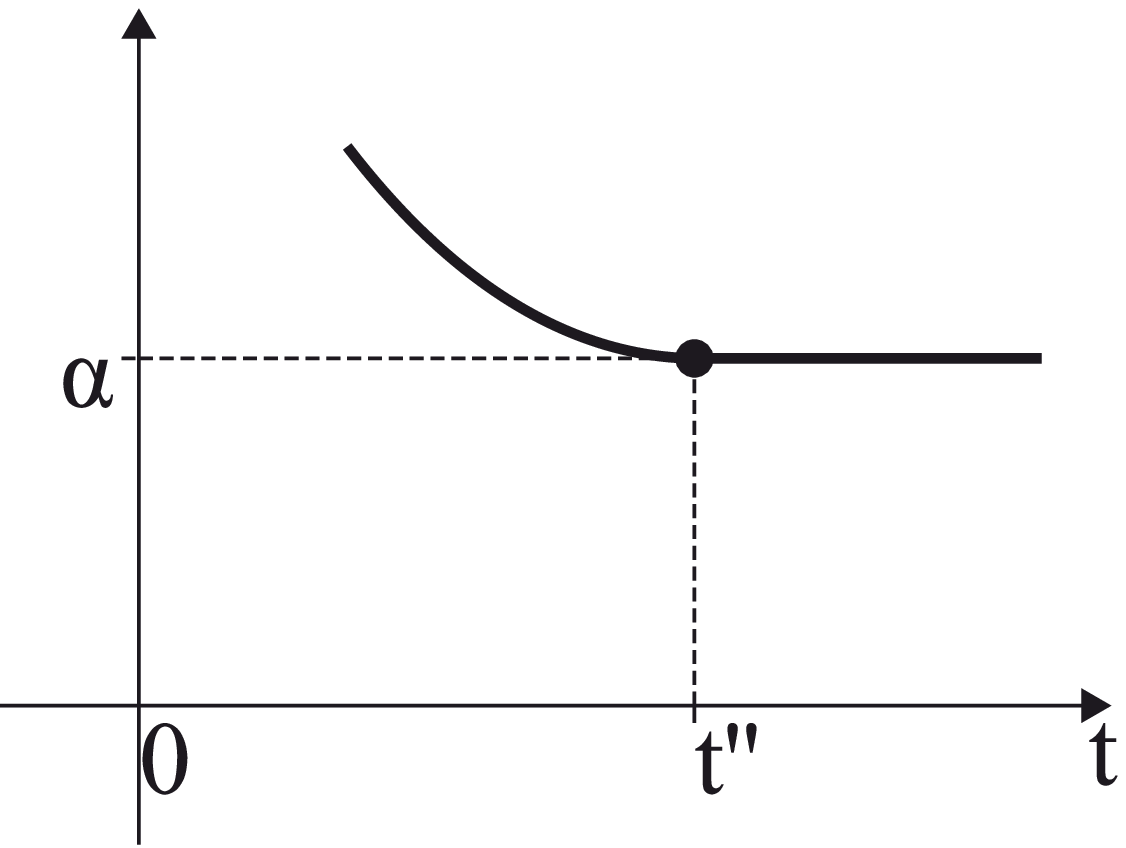}$}
\center{a: $x(t'')\le 0$}
\end{minipage}
\hfill
\begin{minipage}[h]{0.35\linewidth}
\centering{$\includegraphics[width=\linewidth]{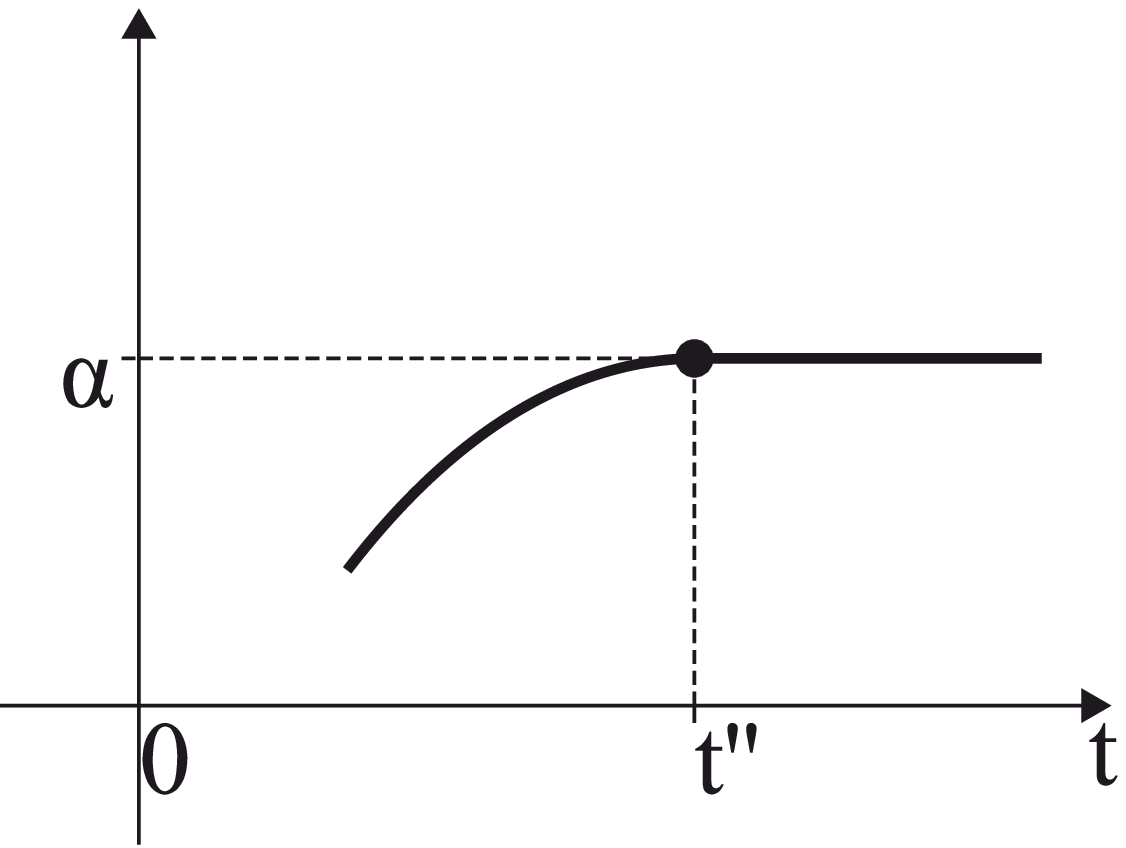}$}
\center{b: $x(t'')< 0$}
\end{minipage}
%\vfill
\caption{Reaching level $\psi(t'')=\alpha$ with $\dot \psi(t'')=0$}
\label{inopt2}\end{figure}

%\begin{figure}[h!]
%\begin{minipage}[h]{0.35\linewidth}
%\center{$\includegraphics[width=\linewidth]{Inopt5}$}
%\end{minipage}
%\hfill
%\begin{minipage}[h]{0.35\linewidth}
%\center{$\includegraphics[width=\linewidth]{Inopt6}$}
%\end{minipage}
%%\vfill
%\caption{Starting from level $\psi(t')=\alpha$ with $\dot \psi(t')=0$ and returning to level $\psi(t'')=\alpha$ with $\dot \psi(t'')=0$}
%\label{inopt3}
%\end{figure}

\begin{proof}\,
Using equation (\ref{adjointEq1}),  we can write the second time derivative
of $\psi$ as follows:
\begin{equation}\label{psiSecondDeriv}
\ddot{\psi}=\dot\psi \varphi^{\prime}(x) + \psi\varphi^{\prime\prime}(x)\dot x.
\end{equation}

Consider the case 1a, where $\psi > \alpha$ in a right neighborhood of $t'.$
Then $u = 1$ there, and Lemma \ref{xdot} gives $\dot x > 0.$
Since $x(t) < 0$ near $t',$ we have $\varphi''(x) < 0$ and then (\ref{psiSecondDeriv})
gives $\ddot{\psi} < 0$ in a right neighborhood of $t'.$
From the condition $\dot{\psi}(t') = 0$ we obtain $\psi(t) < \alpha$ in
a right neighborhood of $t',$ a contradiction.

The cases 1b (where $\psi(t) < \alpha$ in a right neighborhood of $t'$),
2a and 2b are analyzed similarly.  %\ctd
\end{proof}

%%----------------------------------------------------------------
\section*{Crossing the level $\psi=\alpha$}

Let us find the values $x(t)$ corresponding to the time moments from $M.$

According to MP, $H=const$ along the optimal trajectory.
Thus,
$$x + \psi \left(u - \varphi(x) - g\right) - \alpha u = const$$
for  $0\le t \le T.$
For $t\in M,$ we have
$$
x + \alpha \left( u - \varphi(x) - g\right) - \alpha u = const,
$$
which yields
$$
x - \alpha \varphi(x) = const,
$$
or, finally, there exists such a constant $c$ that
$$
\varphi(x) - \frac{x}{\alpha} = c \;\quad \mbox{ for all }t\in M.
$$

Note that $\alpha > 0$ by Proposition \ref{alpha}.\, During the
following steps we will use this information to exclude inoptimal
trajectories from the analysis.

Our plan is as follows. We will find values which $x(t)$ can take on $M$
and then find what optimal trajectories correspond to them.
\ssk

For every constant $c$ define the sets
$$
X_C:=\left\lbrace x= x(t) \quad | \quad t \in M, \quad
\varphi(x) - \frac{x}{\alpha} = c\right\rbrace
$$
and
$$  M_C:=\left\lbrace t \in M \quad | \quad x(t) \in X_C\right\rbrace
$$
and use our knowledge on the properties of the set $M$ to narrow the
family of trajectories that satisfy MP.

%%---------------------------------------------------------
\section*{Some properties of the set $X_C$}
\begin{lemma}\label{xMInopCase}
If $X_C=\left\lbrace x_1\right\rbrace$ is a singleton, then $M$ is a segment
or a point. If, in addition, $x_1 \le 0,$ then the corresponding trajectory
does not satisfy MP.
\end{lemma}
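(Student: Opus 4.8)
The plan is to prove the two assertions separately.

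\emph{$M$ is a segment or a point.} Since $X_C=\{x_1\}$ is a singleton and every value $x(t)$ with $t\in M$ lies in $X_C$, I would first observe that $x(t)\equiv x_1$ on $M$. To see that $M$ is connected, suppose it had a hole, i.e.\ a connected component $]a,b[$ of $[0,T]\setminus M$ with $a,b\in M$. On $]a,b[$ the continuous function $\psi-\alpha$ keeps a constant sign, so by (\ref{uOpt}) the control is constant there ($u=1$ if $\psi>\alpha$, $u=0$ if $\psi<\alpha$), and Lemma~\ref{xdot} gives $\dot x>0$, resp.\ $\dot x<0$, throughout $]a,b[$. Hence $x$ is strictly monotone on $]a,b[$, so $x(b)\neq x(a)$, contradicting $x(a)=x(b)=x_1$. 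Thus $M$ has no hole; being closed and bounded it is a closed segment, possibly degenerate to a point.

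\emph{The case $x_1\le 0$.} Put $t^*=\max M$. Since $\psi<\alpha$ in a left neighbourhood of $T$, we have $t^*<T$; and as $\psi\neq\alpha$ on $]t^*,T]$ with $\psi(T)=0<\alpha$, the sign of $\psi-\alpha$ is constant there, so $\psi<\alpha$, whence $u=0$ and $\dot x<0$ on $]t^*,T]$ by Lemma~\ref{xdot}. I would then split according to the geometry from the first part. If $M=\{t_*\}$ is a point, only two configurations are compatible with $\psi<\alpha$ on $]t_*,T]$. Either $\psi<\alpha$ on both sides of $t_*$, in which case $\psi<\alpha$ on all of $[0,T]\setminus\{t_*\}$, forcing $u\equiv 0$ and $\int_0^T u\,dt=0$ against (\ref{uIneq}); or $\psi$ crosses $\alpha$ transversally, so $\psi>\alpha$ on $]0,t_*[$, where $u=1$ makes $x$ strictly increase from $x(0)=0$, giving $x_1=x(t_*)>0$ and contradicting $x_1\le 0$.

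\emph{The segment case.} If $M=[t',t'']$ is nondegenerate I would work at the right endpoint $t''(=t^*)$. On the segment $\psi\equiv\alpha$, hence $\dot\psi\equiv 0$ there, and by continuity of $\dot\psi$ this yields $\dot\psi(t'')=0$. To the right of $t''$ we have $u=0$ and $\dot x<0$, and since $x(t'')=x_1\le 0$ the trajectory immediately enters the region $x<0$, where $\varphi''<0$. Then in (\ref{psiSecondDeriv}) the inhomogeneous term $\psi\varphi''(x)\dot x$ is positive, so $\dot\psi$ becomes positive just to the right of $t''$ and $\psi>\alpha$ there, contradicting $\psi<\alpha$; this is precisely the pattern excluded by Lemma~\ref{psiProps}(1b).

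The step I expect to be the main obstacle is the boundary value $x_1=0$, where the switching velocity is $0$ and $\varphi''(0)$ is undefined, so one cannot simply evaluate $\ddot\psi(t'')$ by plugging into (\ref{psiSecondDeriv}). I would get around this by treating $\dot\psi$ as the solution of the linear equation $\ddot\psi=\varphi'(x)\dot\psi+\psi\varphi''(x)\dot x$ and integrating from $t''$: because $x(t)<0$ for $t$ slightly larger than $t''$, the forcing term $\psi\varphi''(x)\dot x$ stays strictly positive on that interval while $\dot\psi(t'')=0$, so $\dot\psi(t)>0$ for $t>t''$ irrespective of the behaviour of $\varphi''$ near $0$. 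This again forces $\psi>\alpha$ to the right of $t''$ and closes the argument.
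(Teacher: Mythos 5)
Your proof is correct, and its skeleton coincides with the paper's: connectivity of $M$ via strict monotonicity of $x$ on gaps (Lemma~\ref{xdot}), elimination of the segment case via the sign of $\ddot\psi$ in (\ref{psiSecondDeriv}) (the content of Lemma~\ref{psiProps}), and a crossing/sign analysis for the point case. The differences are local but worth recording. For the point case the paper argues that $\psi<\alpha$ before $t_*$ forces $\psi>\alpha$ after, hence a second crossing and a contradiction, and it treats $x_1=0$ by a separate argument placing $t_*=0$; you instead exhaust the two possible sign configurations on $[0,T]\setminus\{t_*\}$ (using $\psi<\alpha$ on $]t_*,T]$, which follows from $\psi(T)=0$ and the fact that $M$ is a singleton), reducing everything to either $u\equiv 0$ against (\ref{uIneq}) or $x(t_*)>0$ against $x_1\le 0$ --- this handles $x_1<0$ and $x_1=0$ uniformly and dispenses with Lemma~\ref{psiProps} in that case. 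For the segment case the paper cites the pattern at the \emph{left} endpoint (Figure~\ref{inopt2}b, reaching the level with zero derivative), which tacitly requires the segment not to start at $t=0$; you work at the \emph{right} endpoint (the pattern of Lemma~\ref{psiProps}, case 1b), which always has a right neighbourhood since $\max M<T$, so your variant is slightly more robust. Finally, your integrating-factor argument for the boundary value $x_1=0$, where $\varphi''(0)$ is undefined and one cannot evaluate $\ddot\psi$ at the endpoint itself, supplies a detail the paper glosses over: the text of Lemma~\ref{psiProps}, case 1, is stated for $x(t')<0$, whereas the segment with $x_1=0$ needs precisely the $x(t')\le 0$ version, and your observation that $x(t)<0$ strictly to the right of $t''$ (because $u=0$ gives $\dot x<0$) makes the forcing term $\psi\varphi''(x)\dot x$ positive there, so that $\dot\psi>0$ and $\psi>\alpha$ just after $t''$ regardless of the behaviour of $\varphi''$ near the origin.
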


\begin{proof}\,
From Lemma \ref{xdot} it follows immediately that $M$ is either a point
or a segment. (If $\psi(t)$ intersects the level $\alpha$ twice, then
$X_C$ should contains two values). Let us show that $x_1 \le 0$ is not
possible. Indeed, if $M$ is a segment then we have a contradiction
with Lemma \ref{psiProps} (see Figure 2b).

If $M$ is a point and $x_1 < 0$ then the only possibility is that		
$\psi(t) < \alpha$ on $(0, t_1)$ and $\psi(t_1) = \alpha,$ where $t_1$
is a left bound of $M$. But since $\dot \psi$ is a continuous function,
this means that $\psi(t)>\alpha$ in a right neighborhood of $t_1$ and then
$\psi(t)$ should cross the level $\alpha$ once more to satisfy the
transversality condition $\psi(T)=0,$ a contradiction.

If $M$ is a point and $x_1 = 0,$ then obviously $\psi = \alpha$
at $t=0$ and decreases on $(0, T)$ to zero, thus $u\equiv 0,$
a contradiction with $\Delta m > 0.$ %\ctd
\end{proof}

%%-------------------------------------------------------------
\section{Classification of trajectories}

For any $\alpha > 0$ consider the function 
$\Phi(x) = \varphi(x) - \frac{x}{\alpha}\;$ and define 
$$
c_{min} = \min_{x\in [x_{min}, x_{max}]} \Phi(x), \qq
c_{max} = \max_{x\in [x_{min}, x_{max}]} \Phi(x).
$$
Then, for any $c\in [c_{min}, c_{max}],$ equation
\begin{equation}\label{equationC}
\varphi(x) - \frac{x}{\alpha} = c
\end{equation}
has a solution $x\in [x_{min}, x_{max}],$ i.e., the set $X_C$ has 
a nonempty intersection with $[x_{min}, x_{max}]$.  \ssk 

In addition, define $x^{\sim}$ from the relations  
\begin{equation*}
\varphi'(x^{\sim})= 1/\alpha\,, \qquad x^{\sim} > 0.
\end{equation*}
Since $\varphi'(x)$ strictly increases for $x > 0,$ $x^\sim$ is uniquely defined.
\ssk

Depending on $\alpha,$ the function $\Phi(x)$ can have one, two or
three roots on $[x_{min}, x_{max}],$ including the trivial root $x=0.$ 
In what follows we will use this fact to obtain different types of 
extremals.

For example, if for a given $\alpha$ the function $\Phi(x)$ has only zero
root on $[x_{min}, x_{max}],$ then $X_C$ consists of one value and, 
according to Lemma \ref{xMInopCase}, the only two possible types of extremals 
for this case are the well-known ``bang-bang'' or ``bang-singular-bang'' 
ones (see, e.g., \cite{DmiSam2}).  \ssk 

In view of this, let us introduce the following partition of the range of
parameters $g,\alpha,c.$

\begin{enumerate}[I.]
\item $0<g<0.5.$

Then $1-g>g,$ whence to $x_{min}>-x_{max}$ and $\frac{x_{max}}{1-g}<-\frac{x_{min}}{g}.$

\begin{enumerate}[1.]
\item $ \dis \alpha \le \frac{x_{max}}{1-g}\,.$

\begin{enumerate}[a.]
\item $x_{max}\leqslant x^{\sim}$.

\begin{figure}[H]\label{Ia1}
\centering{$\includegraphics[width=0.65\linewidth]{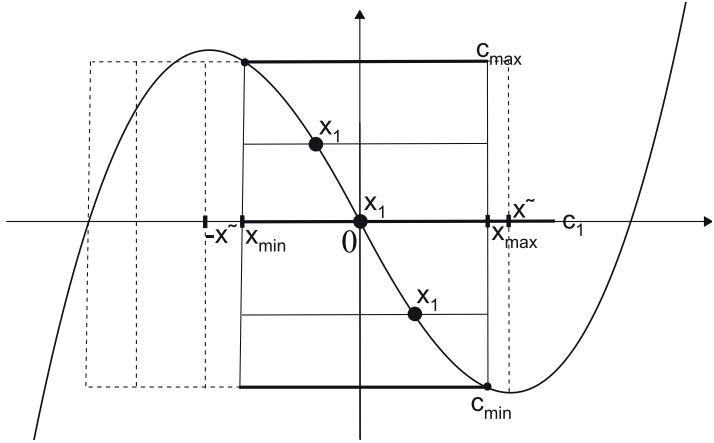}$}
\caption{Case I.1.a}
\end{figure}

Here for any $c\in[c_{min}, c_{max}]$ equation (\ref{equationC}) has 
a unique solution $x_1,$ i.e. $X_C=\left\lbrace x_1 \right\rbrace.$ 
Set $c_1 = \varphi(0).$ Thus, in this case the partition in $c$ is as follows:

\begin{enumerate}[1)]
\item $c_{min}\leqslant c < c_1.$ \ssk 
%\paragraph{}

Here $x_1\in \left]0,x_{max}\right[.$ According to Lemma \ref{xMInopCase}, 
the set $M$ is either a point or a segment. Hence, the following two types 
of behavior of $\psi(t)$ and corresponding $x(t)$ are possible.
\ssk

%\paragraph{}
\q The first one is when $\psi(t)$ decreases on $[0, t_1]$ from 
$\psi(0)>\alpha$ to $\alpha$, where $x(t_1)=x_1,$ crosses the level 
$\psi(t_1)=\alpha$ at $t_1$ and then decreases on $(t_1,T]$ from $\alpha$ 
to $0.$ Thus, $u=(1,0)$ on the intervals $]0,t_1[,$ $]t_1,T[,$ i.e. has 
the bang-bang form. Attribute such a trajectory to the \emph{type Ia}. 
\ssk

%\paragraph{}
\q The second one is when $\psi(t)$ comes to the level $\psi(t_1)=\alpha$ 
with zero time derivative, stays at the level $\alpha$ on a time interval 
$]t_1,t_2[,$ and then decreases on $]t_2,T[$ from $\alpha$ to $0.$

Here the control is bang-singular-bang, and, as usual, to find the value 
of singular control on $]t_1,t_2[,$ we differentiate the equality 
$\psi(t)\equiv \alpha,$ obtaining $\dot \psi= -1+\alpha\varphi'(x)\equiv 0.$
Since the function $\varphi'(x)$ is strictly monotone, $x(t)= const,$ 
whence $\dot x= u-\varphi(x) - g=0,$ and so, \vad 
\begin{equation}\label{uSng}
u_{sing}(t)=\varphi(x(t)) + g \qquad \mbox{on}\;\;\; (t_1,t_2).
\end{equation}
Thus, here we get $u=(1,u_{sing},0)$ on
$(]0,t_1[,\,]t_1,t_2[,\,]t_2,T[)$ with a singular subarc $]t_1,t_2[.$

Note that, for a given starting point $t_1$ of singular subarc, the corresponding
endpoint is uniquely determined as
\begin{equation}\label{t2formula}
t_2\;=\; t_1+\frac{m_0-t_1-m_T}{\varphi\left(x(t_1)\right) + g},
\end{equation}
which can be easily seen from equation
$$
m_0 - t_1 + u_{sing}(t_2 - t_1) =\, m_T\,.
$$
Attribute such a trajectory to the \emph{type Ib}. \ssk 

\item $c = c_1$.

Here $x_1 = 0$ and Lemma \ref{xMInopCase} does not give us extremals.
\ssk

\item $c_{1} < c \leqslant c_{max}\,.$

Here $x_1\in \left]x_{min}, 0\right]$ and by Lemma \ref{xMInopCase}, 
such a trajectory does not satisfy the maximum principle.

\end{enumerate}

\item $x_{max}>x^{\sim},$  $\; x_{min}\geqslant -x^{\sim}\,.$

\begin{figure}[h]\label{Ia2}
\centering{$\includegraphics[width=0.65\linewidth]{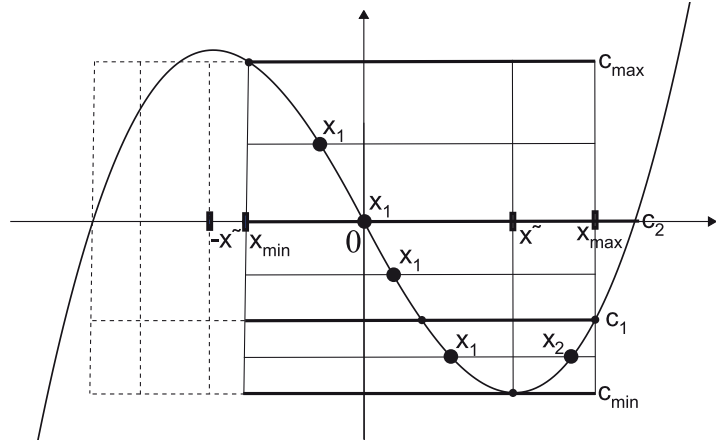}$}
\caption{Case I.1.b}
\end{figure}

Here we define $c_1 = \varphi(x_{max}) - \frac{x_{max}}{\alpha},$ 
$\q c_2 = \varphi(0).$

In this case the partition w.r.t. $c$ is as follows:

\begin{enumerate}[1)]
\item $c=c_{min}\,.$

Here $X_C=\left\lbrace x^\sim\right\rbrace,$ which gives us either a trajectory of type Ia or a trajectory of type Ib.
\ssk

\item $c_{min}<c\leqslant c_1\,.$

Here $X_C=\left\lbrace x_1, x_2\right\rbrace,$ where $x_1 \in ]0,x^\sim[$ and $x_2 \in ]x^\sim, x_{max}[$ which gives us a trajectories of type Ia or Ib (with $x(t_1)=x_1$ or $x(t_1)=x_2$).
\ssk

\item $c_1<c<c_2\,.$

Here $X_C=\left\lbrace x_1 \right\rbrace,$ where $x_1 \in]0,x^\sim[,$ which gives us a trajectory of type Ia or Ib.
\ssk

\item $c_2 \leqslant c\leqslant c_{max}\,.$

Here $X_C=\left\lbrace x_1 \right\rbrace,$ where $x_1 \in ]x_{min},0[$ 
and by Lemma \ref{xMInopCase} this case does not give us extremals. 
\end{enumerate}

\item $x_{max}>x^{\sim},$ $\; x_{min}< -x^{\sim}\,.$

\begin{figure}[h!]\label{Ia3}
\centering{$\includegraphics[width=0.65\linewidth]{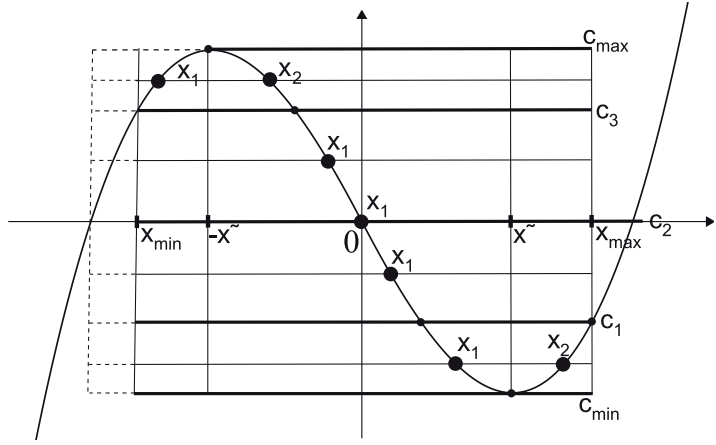}$}
\caption{Case I.1.c}
\end{figure}
\end{enumerate}

Define $c_1 = \varphi(x_{max}) - \frac{x_{max}}{\alpha},\;$ 
$c_2 = \varphi(0),\;$ $c_3 = \varphi(x_{min}) - \frac{x_{min}}{\alpha}\,.$
\ssk

In this case the partition in $c$ is as follows:

\begin{enumerate}[1)]

\item $c=c_{min}\,.$

Here $X_C=\left\lbrace x^\sim\right\rbrace,$ which gives us either a trajectory 
of type Ia or a trajectory of type Ib.
\ssk

\item $c_{min}<c\le c_1\,.$

Here $X_C=\left\lbrace x_1, x_2 \right\rbrace,$ where $x_1 \in ]0,x^\sim[$ 
and $x_2 \in]x^\sim, x_{max}[,$ which gives us a trajectory of type Ia 
or Ib (with $x(t_1)=x_1$ or $x(t_2)=x_2$).
\ssk

\item $c_1< c <c_2\,.$

Here $X_C=\left\lbrace x_1 \right\rbrace,$ where $x_1 \in]0,x^\sim[,$ 
which gives us a trajectory of type Ia or Ib.
\ssk 

\item $c_2 \leqslant c< c_3\,.$

Here $X_C=\left\lbrace x_1 \right\rbrace,$ where $x_1 \in ]x_{min},0[$ 
and by Lemma \ref{xMInopCase} this case does not give us extremals. 
\ssk 

\item $c_3\leqslant c < c_{max}\,.$

Here $X_C=\left\lbrace x_1, x_2\right\rbrace,$ where $x_1 \in]x_{min},-x^\sim[$ 
and $x_2 \in]-x^\sim,0[ ,$ and according to Lemma \ref{xProps} and 
Lemma \ref{psiProps} there is only one possible variant of the 
behavior of $\psi(t)$ and corresponding $x(t)$.

Here, $\psi(t)$ increases on $]0, t_1[$ from $\psi(0)<\alpha$ to $\alpha,$ 
then crosses the level $\alpha$ with a strictly positive time derivative, 
returns to value $\alpha$ at a time $t_2>t_1$ and then crosses the value 
$\alpha$ with negative time derivative and decreases on $]t_2,T[$ from 
$\alpha$ to $0.$

Thus, $u=(0,1,0)$ on the intervals $]0,t_1[$, $]t_1,t_2[$, $]t_2,T[,$ 
which gives us a \emph{trajectory of type IIa}.
%, or $u=(0,1,u_{sing},0)$ on consecutive intervals $(0,t_1)$, $%(t_1,t_2)$, $(t_2,t_3),$ $(t_3,T)$ ( a \emph{trajectory of type IIb})
\ssk

\item $c=c_{max}\,.$

Here $X_C=\left\lbrace -x^\sim\right\rbrace$ and according to Lemma \ref{xMInopCase} 
this case does not give us trajectories satisfying the maximum principle.
\end{enumerate}

\item $\dis \frac{x_{max}}{1-g} \leqslant \alpha< -\frac{x_{min}}{g}\,.$

\begin{enumerate}[a.]

\item $x_{min}\geqslant -x^{\sim}.$
\begin{figure}[h]\label{Ib1}
\centering{$\includegraphics[trim = 0 50 0 15 , clip,width=\linewidth]{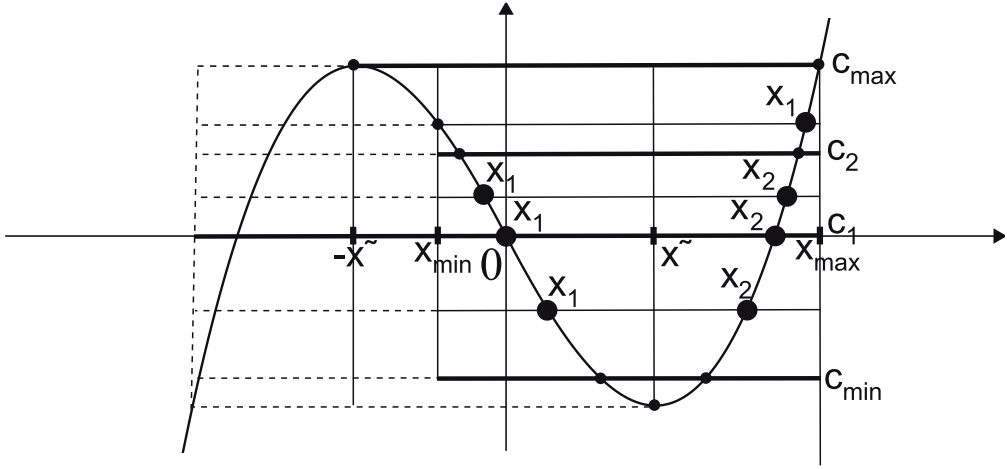}$}
\caption{Case I.2.a}
\end{figure}

Define $c_1 = \varphi(x_{0}),$ 
$\; c_2 = \varphi(x_{min}) - \frac{x_{min}}{\alpha}\,.$

Thus, in this case the partition in $c$ is as follows:

\begin{enumerate}[1)]

\item $c=c_{min}\,.$

Here $X_C=\left\lbrace x^\sim\right\rbrace,$ which gives us either a 
trajectory of type Ia or a trajectory of type Ib.
\ssk

\item $c_{min}<c< c_1\,.$

Here $X_C=\left\lbrace x_1, x_2 \right\rbrace,$ where $x_1 \in]0,x^\sim[$ 
and $x_2 \in]x^\sim, x_{max}[ ,$ which gives us again a trajectory of 
type Ia or Ib (with $x(t_1)=x_1$ or $x(t_1)=x_2$).
\ssk 

\item $c_1 \leqslant c < c_{max}\,.$

Here $X_C=\left\lbrace x_1, x_2\right\rbrace,$ where $x_1 \in]x_{min},0[$ 
and $x_2 \in ]x^\sim, x_{max}[,$ which gives us only trajectories of 
type Ia or Ib.  \ssk 

\item $c=c_{max}\,.$

Here $X_C=\left\lbrace x_1=-x^\sim, x_2=x_{max}\right\rbrace.$
According to Lemma \ref{xdot}, $X_C=\left\lbrace -x^\sim \right\rbrace,$ 
which contradicts Lemma \ref{xMInopCase}.

\end{enumerate}

\begin{rem}\,
Case 
%$\varphi(x_{max}) - \frac{x_{max}}{\alpha} > \varphi( - x^\sim) + \frac{x^\sim}{\alpha}$ 
$\F(x_{max})  > \F( - x^\sim) $ 
does not add new trajectories to the final picture (since $X_C = {x_1}$).
\end{rem}

\item $x_{min}< -x^{\sim}$
\begin{figure}[h]\label{Ib2}
\centering{$\includegraphics[trim = 0 50 0 30, clip, width=0.9\linewidth]{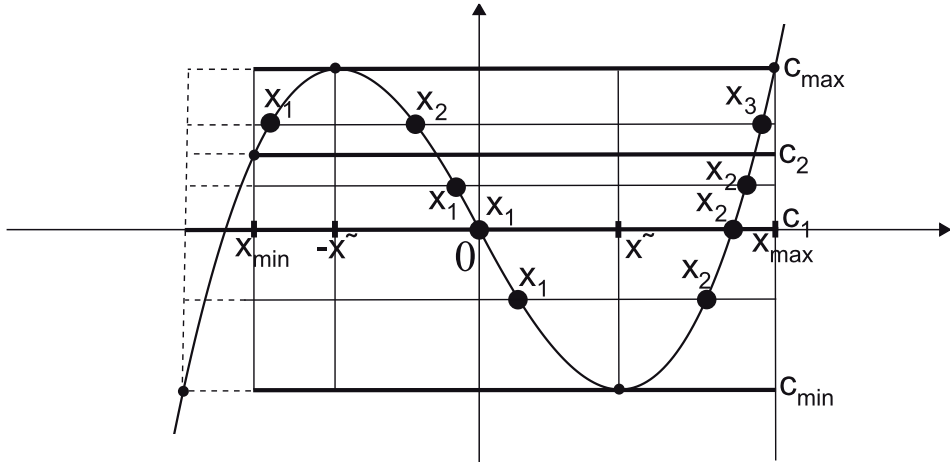}$}
\caption{Case I.2.b}
\end{figure}

Define $c_1 = \varphi(0),\;$ $c_2 = \varphi(x_{min}) - \frac{x_{min}}{\alpha}\,.$

In this case the partition in $c$ is as follows:

\begin{enumerate}[1)]
\item $c=c_{min}\,.$

Here $X_C=\left\lbrace x^\sim\right\rbrace,$ which gives us either a trajectory 
of type Ia or a trajectory of type Ib.  \ssk 

\item $c_{min}<c < c_1\,.$

Here $X_C=\left\lbrace x_1, x_2 \right\rbrace,$ where $x_1 \in]0,x^\sim[$ 
and $x_2 \in]x^\sim, x_{max}[,$ hence we obtain a trajectory of type Ia 
or Ib (with $x(t_1)=x_1$ or $x(t_2)=x_2$).
\ssk

\item $c_1 \leqslant c< c_2\,.$

Here $X_C=\left\lbrace x_1, x_2,\right\rbrace$ where $x_1 \in ]-x^\sim, 0[$ 
and $x_2 \in ]x^\sim, x_{max}[,$ and  due to Lemma \ref{xMInopCase} 
we obtain only extremals of type Ia or Ib.   \ssk 

\item $c_2\leqslant c< c_{max}\,.$

Here $X_C=\left\lbrace x_1, x_2, x_3\right\rbrace,$ where $x_1 \in]x_{min}, -x^\sim[,$ 
$x_2 \in]-x^\sim,0[$ and $x_3 \in]x^\sim,x_{max}[,$ and due to the 
Lemma \ref{xProps}, Lemma \ref{psiProps} and Lemma \ref{xMInopCase} we 
have trajectories of types Ia, Ib, IIa. Moreover, $\psi(t)$ can have
the following behavior:\, it increases on $]0, t_1[$ from $\psi(0)<\a$ 
to $\alpha,$ then crosses the level $\alpha$ with a strictly positive 
derivative, returns to value $\alpha$ at time $t_2>t_1$ with zero derivative, 
then stays on the level $\alpha$ on an interval $]t_2,t_3[$ and then 
decreases from $\alpha$ to $0$ on $]t_3,T[$.

Thus, $u=(0,1,u_{sing},0)$ on the intervals $]0,t_1[$, $]t_1,t_2[$, 
$]t_2,t_3[,$ $]t_3,T[,$ which gives us a \emph{trajectory of type IIb}.
\ssk

\item $c=c_{max}\,.$

Here $X_C= \{ x_1= -x^\sim,\; x_2=x_{max}\}$ and due to Lemma \ref{xMInopCase} 
we will not obtain any extremals. 

\end{enumerate}

\begin{rem}\,
Case 
%$\varphi(x_{max}) - \frac{x_{max}}{\alpha} > \varphi( - x^\sim) + \frac{x^\sim}{\alpha}$ 
$\F(x_{max})  > \F( - x^\sim) $ 
does not add new trajectories to the final picture (since $X_C = {x_1}$).
\end{rem}

\end{enumerate}

\item $\alpha\geqslant -\frac{x_{min}}{g}\,.$

\begin{figure}[h]
\center{$\includegraphics[trim = 0 50 0 30, clip, width=0.9\linewidth]{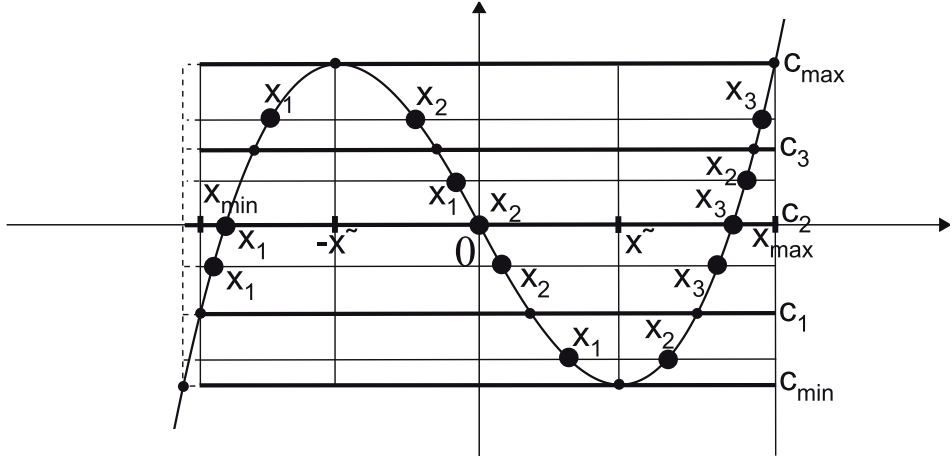}$}
\caption{Case I.3}
\end{figure}

Define $c_1 = \varphi(x_{min}) - \frac{x_{min}}{\alpha},\;$ 
$c_2 = \varphi(0).$

Thus, in this case the partition in $c$ is as follows.

\begin{enumerate}[1)]
\item $c=c_{min}\,.$

Here $X_C=\left\lbrace x^\sim\right\rbrace,$ which gives us either a 
trajectory of type Ia or a trajectory of type Ib.  \ssk

\item $c_{min}< c < c_1\,.$

Here $X_C=\left\lbrace x_1\, x_2\right\rbrace,$ where $x_1 \in]0,x^\sim[$ 
and $x_2 \in]x^\sim, x_{max}[,$ which gives us a trajectory of type Ia 
or Ib (with $x(t_1)=x_1$ or $x(t_1)=x_2$). \ssk 

\item $c_1\leqslant c< c_2\,.$

Here $X_C=\left\lbrace x_1, x_2, x_3 \right\rbrace,$ where 
$x_1 \in]x_{min},-x^\sim[,$ $x_2 \in ]0, x^\sim[$ and $x_3\in]x^\sim,x_{max}[,$ 
which gives us either trajectories of types Ia, Ib, IIa, IIb, or the 
following behavior of $\psi(t).$

Obviously the only variant for $\psi(t)$ to cross the level $\alpha$ at 
three time moments corresponding to three values from $X_C$ is first to 
decrease on $[0,t_1]$ from $\psi(0)>\alpha$ to $\alpha,$ then cross down 
the level $\alpha,$ return to level $\alpha$ ``from below'' on $[t_1, t_2],$ 
then cross up the level $\alpha,$ return to level $\alpha$ ``from above'' 
on $[t_2, t_3],$ and finally decrease from $\alpha$ to $0$ on $[t_3,T].$ 
Of course, it is possible if and only if $x(t_1) = x_3$ and $x(t_3) = x_2\,,$ 
otherwise we obtain $\dot \psi(t_3)> \dot \psi(t_1)$ and due to the 
comparison theorem we have a contradiction with the transversality condition 
$\psi(T) = 0.$ More precisely, in such a case we would obtain 
$\psi(t_1)=\psi(t_3),\,$ $\dot \psi(t_1) < \dot \psi(t_3),$ according to 
(\ref{adjointEq1}), and $\varphi' ( x(t_1)) < \varphi (x(t_3))$ since 
$0<x_2<x_3\,.$ Then, the comparison theorem implies 
$\psi(t)>\psi\left( t - (t_3 - t_1)\right)$ for all $t > t_3\,,$ which 
contradicts the transversality condition.
\ms 

Let us call trajectories corresponding to the ``basic'' case, in which 
$u = (1, 0, 1, 0)$ on the consecutive time intervals $]0, t_1[,$ $]t_1, t_2[,$ 
$]t_2, t_3[,$ $]t_3, T[,$ \emph{trajectories of type III}.
\ssk

Note that Lemma \ref{psiProps} seemingly allows us to expand either $t_1$ or $t_3$ or both of them to a singular subarc, but it can be easily shown that such a expansion leads to a contradiction.

Indeed, consider expansion of a single point $t_1$ to a singular subarc. 
Then we have $\dot \psi(t_1) = 0,$ i.e. $-1 + \alpha \varphi'(x_3) = 0,$
which is equal to
$$
\left.\left(\varphi(x) - \frac{x}{\alpha}\right)'\right|_{x=x_3} = 0.
$$
Since $x_3>0,$ we have $x_3 = x^\sim$ with account of properties of 
$\varphi(x) - \frac{x}{\alpha}.$ Thus, $X_C$ contains only two points, 
a contradiction.  \ssk 

\item $c= c_2\,.$

Here $X_C=\{ x_1,\, x_2=0,\, x_3=-x_1 \},$ where $x_1\in]x_{min},-x^\sim[,$ 
which gives us either a trajectory of type 0\, or a trajectories of types 
Ib, IIb.  \ssk 

\item $c_2 <  c<c_{max}\,.$

Here $X_C=\left\lbrace x_1,\, x_2,\, x_3\right\rbrace,$ where 
$x_1\in]x_{min}, -x^\sim[,$ $x_2\in]-x^\sim,0[$ and 
$x_3\in]x^\sim,x_{max}[,$ and due to the Lemmas \ref{xProps}, 
\ref{psiProps}, and \ref{xMInopCase} we have trajectories of types 
Ia, Ib, IIa, IIb.  \ssk 

\item $c=c_{max}\,.$

Here $X_C=\left\lbrace x_1= -x^\sim, x_2=x_{max}\right\rbrace.$ 
Lemma \ref{xdot} excludes $x_2 = x_{max}$ from $X_C\,,$ and since $x_1<0,$ 
Lemma \ref{xMInopCase} does not give us trajectories satisfying MP.
\end{enumerate}

\begin{rem}\,
Cases 
%$\varphi(x_{max}) - \frac{x_{max}}{\alpha} > \varphi( - x^\sim) + \frac{x^\sim}{\alpha}$ 
$\F(x_{max}) > \F( - x^\sim)$ 
and 
%$\varphi(x_{min}) - \frac{x_{min}}{\alpha} < \varphi(x^\sim) - \frac{x^\sim}{\alpha}$ 
$\F(x_{min})  < \F(x^\sim) $ 
do not add new trajectories to the final picture (since $X_C = {x_1}$).
\end{rem}

\end{enumerate}

\item $g=0.5.$

Here $x_{min}= -x_{max},$ and partition from the previous section reduces 
to the following one:

\begin{enumerate}[1)]
\item $\alpha \leqslant 2 x_{max}$

\begin{enumerate}[a.]

\item $x_{max} \leqslant x^\sim$

The situation is the same as in the case I.1.a

\item $x_{max} > x^\sim$

The situation is the same as in the case I.1.c

\end{enumerate}

\item $\alpha > 2 x_{max}$

The situation is the same as in the case I.3
\end{enumerate}
\ms

%%----------------------------------------------------------------
\item $0.5<g<1$

Then $\dis x_{min}<-x_{max}$ and $\frac{x_{max}}{1 - g} > - \frac{x_{min}}{g}.$

\begin{enumerate}[1.]
\item $\alpha \leqslant - \frac{x_{min}}{g}$

\begin{enumerate}[a.]

\item $x_{min} \geqslant - x^\sim.$

The situation is the same as in the case I.1.a.

\item $x_{min} < - x^\sim, \quad x_{max} \leqslant x^\sim$

\begin{figure}[h]
\centering{$\includegraphics[trim=0 50 0 50, clip, width=0.8\linewidth]{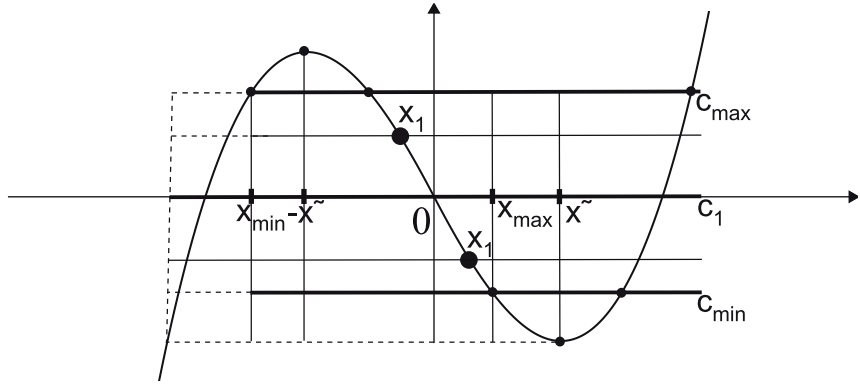}$}
\caption{Case III.1.b}
\end{figure}

Define $c_1 = \varphi(0).$
Thus, in this case the partition in $c$ is as follows:

\begin{enumerate}[1)]

\item $c_{min} \leqslant c < c_1$

Here $X_C=\left\lbrace x_1 \right\rbrace,$ where $x_1\in ]0,x^\sim]$, 
which gives us either a trajectory of type Ia or a trajectory of type 
Ib. \ssk 

\item $c = c_{1}.$

Here $x_1 = 0,$ which gives us no extremals. \ssk 

\item $c_{1} < c \leqslant c_{max}.$

Here $x_1\in \left]x_{min}, 0\right].$ By Lemma \ref{xMInopCase}, 
the corresponding trajectories do not satisfy MP. 
\end{enumerate}

\item $x_{max} > x^\sim$

\begin{figure}[h]
\center{$\includegraphics[trim=0 70 0 30, clip, width=0.8\linewidth]{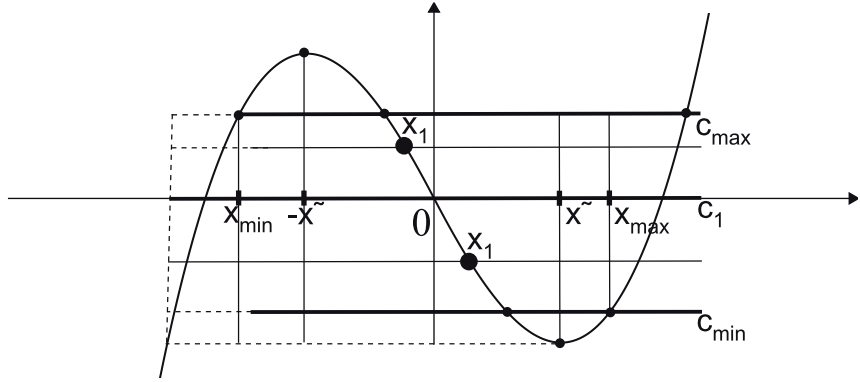}$}
\caption{Case III.1.c}
\end{figure}

Define $c_1 = \varphi(0).$
Thus, in this case the partition in $c$ is as follows:

\begin{enumerate}[1)]

\item $c_{min} \leqslant c < c_1$

Here $X_C=\left\lbrace x_1, x_2 \right\rbrace,$ where $x_1\in ]0,x^\sim[$ 
and $x_2 \in ]x^\sim, x_{max}[$, which gives us either a trajectory of 
type Ia or a trajectory of type Ib. \ssk 

\item $c = c_{1}.$

Here $x_1 = 0,$ which gives no trajectories satisfying MP. \ssk 

\item $c_{1} < c \leqslant c_{max}.$

Here $x_1\in \left]x_{min}, 0\right].$ By Lemma \ref{xMInopCase},
the corresponding trajectories do not satisfy MP. 
\end{enumerate}

\end{enumerate}
\ms

\item 
$\dis - \frac{x_{min}}{g} < \alpha \leqslant \frac{x_{max}}{1 - g}$

\begin{enumerate}[a.]
\item $x_{max} \leqslant x^\sim$

Here we have to consider two cases:
$$
\varphi(x_{min}) -\frac{x_{min}}{\alpha} \geqslant \varphi(x^\sim) - \frac{x^\sim}{\alpha}.
$$

and
$$
\varphi(x_{min}) -\frac{x_{min}}{\alpha} < \varphi(x^\sim) - \frac{x^\sim}{\alpha}
$$

Let us begin with the first one.

\begin{figure}[h]\label{III2a}
\centering{$\includegraphics[trim=0 50 0 35, clip, width=0.85\linewidth]{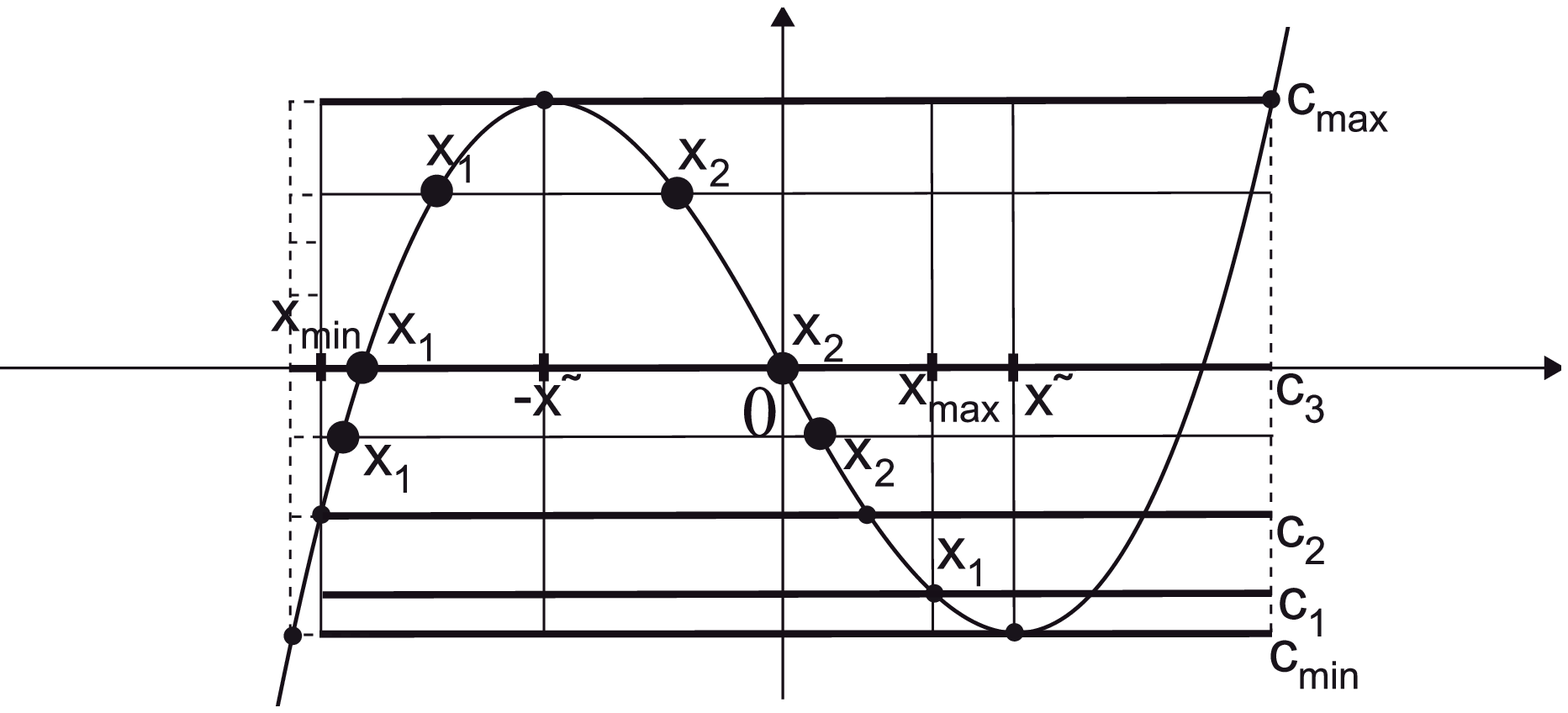}$}
\caption{Case III.2.a, $\varphi(x_{min}) -\frac{x_{min}}{\alpha} \geqslant \varphi(x^\sim) - \frac{x^\sim}{\alpha},$}
\end{figure}

Define $c_1 = \varphi(x_{max}) - \frac{x_{max}}{\alpha},\,$ $c_2 = \varphi(x_{min}) - \frac{x_{min}}{\alpha},$ 
and $c_3 = \varphi(0).$
Thus, in this case the partition in $c$ is as follows:

\begin{enumerate}[1)]
\item $c = c_{min}$

Here $X_C=\left\lbrace x^\sim\right\rbrace,$ which gives us either a trajectory 
of type Ia or a trajectory of type Ib.  \ssk 

\item $c_{min} < c < c_2$

Here $X_C=\left\lbrace x_1\, x_2\right\rbrace,$ where $x_1 \in]0,x^\sim[$ 
and $x_2 \in]x^\sim, x_{max}[,$ which gives us a trajectory of type Ia 
or Ib again (with $x(t_1)=x_1$ or $x(t_1)=x_2$). \ssk 

\item $c_2\leqslant  c<c_3.$

Here $X_C=\left\lbrace x_1, x_2 \right\rbrace,$ where $x_1 \in]x_{min},-x^\sim[,$ 
$x_2 \in ]0, x^\sim[,$ which gives the trajectories of types Ia, Ib, 
IIa, IIb. \ssk 

\item $c=c_3.$

Here $X_C=\left\lbrace x_1, x_2=0\right\rbrace,$
where $x_1\in]x_{min},-x^\sim[,$ which gives us trajectories of types 
Ib, IIb.  \ssk 

\item $c_3 <  c<c_{max}.$

Here $X_C=\left\lbrace x_1, x_2\right\rbrace,$ where $x_1\in]x_{min}, -x^\sim[$ 
and $x_2\in]-x^\sim,0[,$ and due to Lemmas \ref{xProps},  \ref{psiProps}, 
and \ref{xMInopCase}, we have trajectories of types Ia, Ib, IIa, IIb.
\ssk

\item $c=c_{max}$

Here $X_C=\left\lbrace x_1=-x^\sim \right\rbrace$ which does not give us 
new trajectories according to Lemma \ref{xMInopCase}.

\end{enumerate}
\ssk

Now, consider the case $\varphi(x_{min}) -\frac{x_{min}}{\alpha} < 
\varphi(x^\sim) - \frac{x^\sim}{\alpha}.$

\begin{figure}[h]\label{III2a}
\center{$\includegraphics[trim = 0 40 0 30, clip, width=0.8\linewidth]{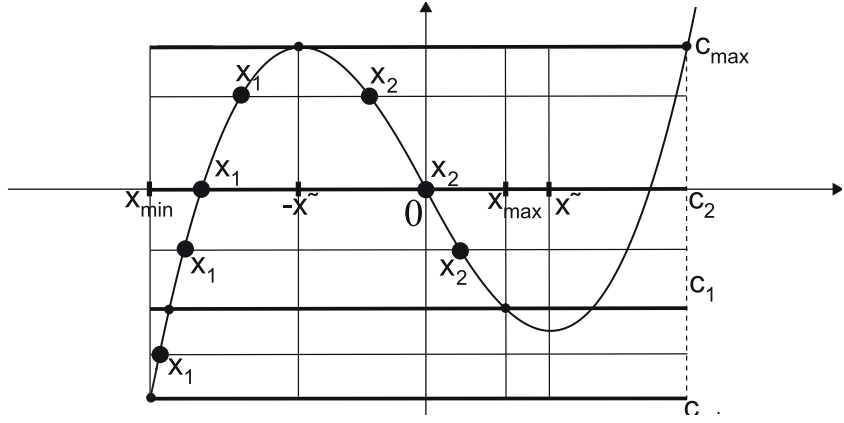}$}
\caption{Case III.2.a, $\varphi(x_{min}) -\frac{x_{min}}{\alpha} < \varphi(x^\sim) - \frac{x^\sim}{\alpha}$}
\end{figure}

The only difference with the preceding case is the following item 
in the partition of $c$: $\; c_{min}<c<c_1\,.$ Here we obtain 
$X_C = \{x_1\},$ where $x_1\in ]x_{min},x^\sim[,$ which does not give us 
new extremals.  \ssk 

\item $x_{max} > x^\sim$

Here we have to consider two cases:
$$
\varphi(x_{min}) -\frac{x_{min}}{\alpha} 
\geqslant \varphi(x^\sim) - \frac{x^\sim}{\alpha}.
$$

and
$$
\varphi(x_{min}) -\frac{x_{min}}{\alpha} < 
\varphi(x^\sim) - \frac{x^\sim}{\alpha}
$$
Let us begin with the first one.

\begin{figure}[h]\label{III2b}
\centering{$\includegraphics[trim = 0 70 0 25, clip, width=0.85\linewidth]{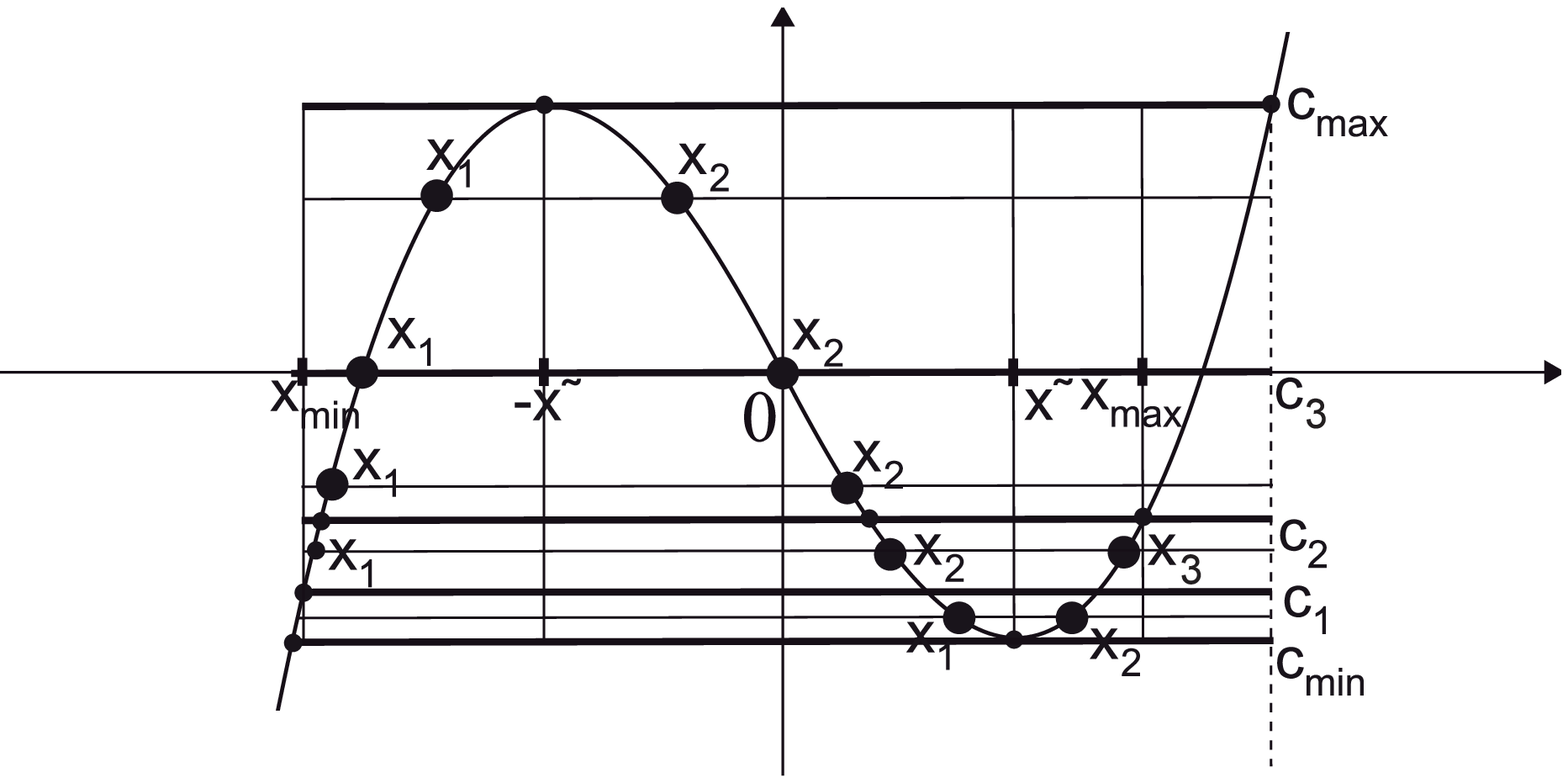}$}
\caption{Case III.2.b, $\varphi(x_{min}) -\frac{x_{min}}{\alpha} \geqslant \varphi(x^\sim) - \frac{x^\sim}{\alpha},$}
\end{figure}

Define $c_1 = \varphi(x_{max}) - \frac{x_{max}}{\alpha},$ 
$\,c_2 = \varphi(x_{min}) - \frac{x_{min}}{\alpha},$ and $c_3 = \varphi(0).$
Thus, in this case the partition of $c$ is as follows:

\begin{enumerate}[1)]
\item $c = c_{min}$

Here $X_C=\left\lbrace x^\sim\right\rbrace,$ which gives us either 
a trajectory of type Ia or a trajectory of type Ib. \ssk  

\item $c_{min} < c < c_1$

Here $X_C=\left\lbrace x_1\, x_2\right\rbrace,$ where $x_1 \in]0,x^\sim[$ 
and $x_2 \in]x^\sim, x_{max}[,$ which gives us a trajectory of type Ia 
or Ib again (with $x(t_1)=x_1$ or $x(t_1)=x_2$). \ssk 

\item $c_1\leqslant  c<c_2.$

Here $X_C=\left\lbrace x_1, x_2, x_3 \right\rbrace,$ where $x_1 \in]x_{min},-x^\sim[,$ 
$x_2 \in ]0, x^\sim[$ and $x_3\in]x^\sim,x_{max}[,$ which gives  
trajectories of types Ia---III. \ssk 

\item $c_2 <  c<c_3.$

Here $X_C=\left\lbrace x_1, x_2\right\rbrace,$ where $x_1 \in]x_{min},-x^\sim[,$ 
$x_2 \in ]0, x^\sim[$ which gives trajectories of types Ia---IIb.
\ssk 

\item $c=c_3.$

Here $X_C=\left\lbrace x_1, x_2=0\right\rbrace,$
where $x_1\in]x_{min},-x^\sim[,$ which gives either a trajectory of 
type 0\, or trajectories of types Ib, IIb.

\item $c_3 <  c<c_{max}.$

Here $X_C=\left\lbrace x_1, x_2\right\rbrace,$ where $x_1\in]x_{min}, -x^\sim[$ 
and $x_2\in]-x^\sim,0[,$ and due to the Lemma \ref{xProps}, \ref{psiProps}, 
and \ref{xMInopCase} we obtain trajectories of types Ia, Ib, IIa, IIb.
\ssk

\item $c=c_{max}$

Here $X_C=\left\lbrace x_1=-x^\sim \right\rbrace$ which gives no new 
trajectories according to Lemma \ref{xMInopCase}.

\end{enumerate}
\ms

Now let us consider the case $\varphi(x_{min}) -\frac{x_{min}}{\alpha} < \varphi(x^\sim) - \frac{x^\sim}{\alpha}.$

\begin{figure}[h]\label{III2a}
\centering{$\includegraphics[trim = 0 30 0 30, clip, width=0.8\linewidth]{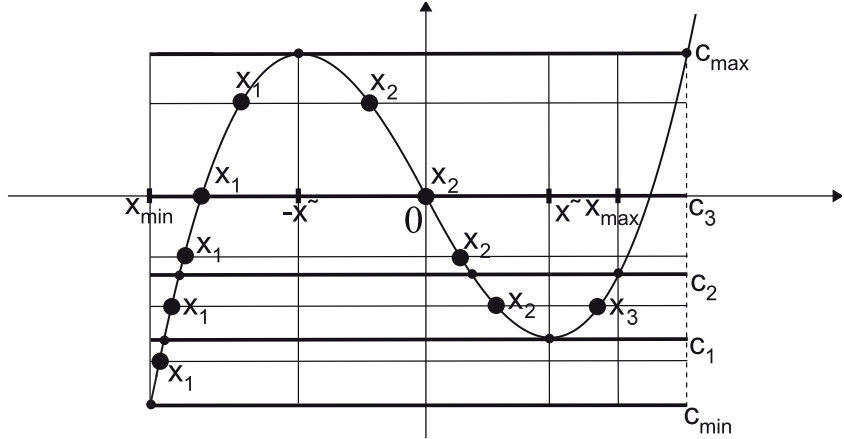}$}
\caption{Case III.2.b, $\varphi(x_{min}) -\frac{x_{min}}{\alpha} < \varphi(x^\sim) - \frac{x^\sim}{\alpha},$}
\end{figure}

The only difference with the preceding case is the following item 
in the partition of $c$: $\; c_{min}<c<c_1$. Here we obtain $X_C = \{x_1\},$ 
where $x_1\in ]x_{min},x^\sim[,$ which does not give new trajectories.
\end{enumerate}  \ms

\item $\alpha > \frac{x_{max}}{1 - g}$

The situation is the same as in the case I.3.

\end{enumerate}

\end{enumerate}

Summing up, we obtain the following possible types of extremals:

\begin{enumerate}[-]

\item Type Ia
 $u=(1,0)$ on consecutive intervals $]0, t_1[,$ $]t_1, T[.$

 \item Type Ib
 $u=(1,u_{sing},0)$ on consecutive intervals $]0,t_1[,$ $]t_1,t_2[,$ $]t_2,T[.$

 \item Type IIa
  $u=(0,1,0)$ on consecutive intervals $]0,t_1[,$ $]t_1,t_2[,$ $]t_2,T[.$
  \item Type IIb
  $u=(0,1,u_{sing},0)$ on consecutive intervals $]0,t_1[,$ $]t_1,t_2[,$ $]t_2,t_3[,$ $]t_3,T[.$

   \item Type III
  $u=(1,0,1,0)$ on consecutive intervals $]0,t_1[,$ $]t_1,t_2[,$ $]t_2,t_3[,$ $]t_3,T[.$
\end{enumerate}

Moreover, we propose the following scheme for searching extremals of possible types:

For given $\varphi(x),$ $g,$ take a parameter $\alpha > 0,$ and first check 
whether the function $\Phi(x)$ has positive or negative roots. If not, 
$\alpha$ should be decreased. If yes, compute the following values:
$$  x_{max}: \quad \varphi(x_{max}) = 1 - g, \qq 
  x_{min}: \quad \varphi(x_{min}) = - g,
$$
and
$$
x^\sim \geqslant 0:  \qquad \varphi'(x^\sim) = 1/\alpha\,.
$$
Then choose extremals for the final analysis according to the following 
scheme:

\begin{enumerate}[I]
  	\item $0 < g < 0.5$ \ms 

  	\begin{enumerate}[1]
  		\item $\alpha\leqslant \frac{x_{max}}{1 - g}$
  			\begin{enumerate}[a]
  				\item $x_{max} \leqslant x^\sim$ --- type Ia
  				\item $x_{max} > x^\sim,$ $x_{min} > - x^\sim$ --- types Ia, Ib
  				\item $x_{max} > x^\sim,$ $x_{min} < - x^\sim$ --- types Ia, Ib, IIa
  			\end{enumerate} \ssk 

  		\item $\frac{x_{max}}{1-g}<\alpha<-\frac{x_{min}}{g}$
  			\begin{enumerate}[a]
  				\item $x_{min} \geqslant - x^\sim$ --- types Ia, Ib
  				\item $x_{min} < - x^\sim$ --- types Ia, Ib, IIa
  			\end{enumerate}  \ssk 
  
  		\item $\alpha > - \frac{x_{min}}{g}$ --- types Ia, Ib, IIa, IIb, III
  	\end{enumerate} \ms 

  	\item $g = 0.5$ \ms  

  	\begin{enumerate}[1]
  		\item $\alpha\leqslant 2 x_{max}$
  			\begin{enumerate}[a]
  				\item $x_{max} < x^\sim$ --- type Ia
  				\item $x_{max} \geqslant x^\sim$ --- types Ia, Ib, IIa
  			\end{enumerate} \ssk
  
  		\item $\alpha > 2 x_{max}$ --- types Ia, Ib, IIa, IIb, III
  	\end{enumerate}  \ms  

  	\item $0.5 < g < 1$  \ms

  	\begin{enumerate}[1]
  		\item $\alpha\leqslant - \frac{x_{min}}{g}$
  			\begin{enumerate}[a]
  				\item $x_{min} \geqslant - x^\sim$ --- type Ia
  				\item $x_{min} < - x^\sim,$ $x_{max} \leqslant x^\sim$ --- type Ia
  				\item $x_{max} > x^\sim$ --- types Ia, Ib
  			\end{enumerate}  \ssk 
  
  		\item $-\frac{x_{min}}{g}<\alpha\leqslant\frac{x_{max}}{1-g}$
  			\begin{enumerate}[a]
  				\item $x_{max} \leqslant x^\sim$ --- types Ia,  IIa
  				\item $x_{max} > x^\sim$ --- types Ia, Ib, IIa, IIb, III
  			\end{enumerate}  \ssk
  
  		\item $\alpha >\frac{x_{max}}{1-g}$ --- types Ia, Ib, IIa, IIb, III
  \end{enumerate}
\end{enumerate}
Note that any trajectory of types IIa, IIb and III includes an interval where $x(t) < 0.$ Thus, if we impose the restriction $x(t) \geqslant 0$ for all $t \in [0, T],$ then the optimal trajectory in problem (\ref{probl}) can be only of types Ia or Ib (the standard well known ones for  the classical Goddard problem). In general, these trajectories can also include an interval where $x(t) < 0,$ but this interval can only end at $T.$

%\red{
Note also that if $\varphi'(0) \geqslant \frac{1}{\alpha}$ then $\Phi(x)$ has only zero root on $[x_{min}, x_{max}],$ and only trajectories of type Ia can satisfy the MP. 
%??} 
%\red{
\begin{rem}
Since always $\psi(t) \geqslant 0,$ the Pontryagin function (\ref{H}) is concave in $(x,u)$ on the convex set $\R \times [0,1].$ Therefore, if we obtain an extremal with $x(t) \geqslant 0$ for all $t \in [0,T],$ then, as is well known (see, e.g., \cite[Theorem 4 in Sect. 2.6]{SeierSid}), it provides the maximum in the problem (\ref{probl}) among all admissible processes with nonnegative velocity component.
\end{rem}%}

Thus, we obtained a full classification of extremals for problem
(\ref{probl}).

%%-------------------------------------------------
%\section{Some special cases}
\section{Case of a quadratic resistance function}

\begin{equation}\label{quadrpsi}
\varphi(x)=\begin{cases}
\begin{aligned}
\frac{b}{2} x^2 + k x,& \quad x\geqslant 0,\\
-\frac{b}{2} x^2 + k x,& \quad x < 0,
\end{aligned}
\end{cases}
\end{equation}
where $k>0,\quad b>0.$

%It is easily seen that if $\alpha \geqslant \frac{1}{k}$ then $\Phi(x)$ has only zero root on $[x_{min}, x_{max}]$ and only trajectories of types $Ia$ and $Ib$ may satisfy the maximum principle.

In this section we use this special form of $\varphi(x)$ to discard trajectories of type III. Since this type of trajectories can appear only if $\varphi'(0) < \frac{1}{\alpha},$ in further considerations we take $0 < \alpha < \frac{1}{k}.$

\subsection{Additional properties of $\psi(t)$}

Recall the formula for the second time derivarive of $\psi(t):$ (see (\ref{psiSecondDeriv})):

$$
\ddot{\psi} = \dot{\psi}\varphi'(x) + \psi \varphi''(x) \dot{x}.
$$
%\red{
and prove the following fact.
\begin{lemma}\label{ddotPsiNeg}
Let $\psi(t') = \alpha$ and $\psi(t) < \alpha$ in the right neighbourhood of $t'.$ Let $x(t) > 0$ on $]t',t''[$ and $x(t'')=0.$ Then $\ddot{\psi}(t) < 0$ on $]t', t''[.$
\end{lemma}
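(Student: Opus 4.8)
The plan is to read off the sign of each of the two terms in the expression (\ref{psiSecondDeriv}) for $\ddot\psi$ on the interval $]t',t''[$. First I would fix the qualitative picture there. Since $\psi(t')=\alpha$ and $\psi<\alpha$ just to the right of $t'$, the maximality condition (\ref{uOpt}) forces $u=0$ on $]t',t''[$, and then Lemma~\ref{xdot} gives $\dot x<0$ on this interval. On $]t',t''[$ we have $x>0$, so by the definition (\ref{quadrpsi}) of the quadratic resistance $\varphi''(x)=b>0$; moreover $\psi(t)>0$ there by Proposition~\ref{psigreaterzero} (each such $t$ satisfies $t<t''\le T$). Hence the second summand $\psi\,\varphi''(x)\,\dot x$ in (\ref{psiSecondDeriv}) is the product of a positive, a positive and a negative factor, i.e.\ it is \emph{strictly negative} throughout the interval.

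It remains to control the first summand $\varphi'(x)\,\dot\psi$. Because $\varphi'(x)>0$ by the standing assumptions on $\varphi$, this term has the sign of $\dot\psi$, so the lemma reduces to proving $\dot\psi<0$ on $]t',t''[$. At the left endpoint this is immediate: as $\psi$ drops below its value $\psi(t')=\alpha$ for $t$ slightly larger than $t'$, the one-sided difference quotient is negative in the limit, giving $\dot\psi(t')\le 0$. The real work is to \emph{propagate} this single-point inequality across the whole interval, since a priori $\dot\psi$ could recover and turn positive.

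To do that I would view $\eta:=\dot\psi$ as the solution of the first-order linear ODE that (\ref{psiSecondDeriv}) becomes, namely $\dot\eta-\varphi'(x)\,\eta=\psi\,\varphi''(x)\,\dot x$, whose right-hand side is exactly the strictly negative quantity identified in the first paragraph. Introducing the positive integrating factor $\mu(t)=\exp\!\left(-\int_{t'}^{t}\varphi'(x(s))\,ds\right)$, one gets $\frac{d}{dt}\big(\mu\,\eta\big)=\mu\,\psi\,\varphi''(x)\,\dot x<0$, so $\mu\,\eta$ is strictly decreasing. Since $(\mu\eta)(t')=\eta(t')=\dot\psi(t')\le 0$, it follows that $\mu\,\eta<0$, and therefore $\dot\psi=\eta<0$, everywhere on $]t',t''[$. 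With $\dot\psi<0$ and $\varphi'(x)>0$ the first summand of (\ref{psiSecondDeriv}) is negative as well, and adding the two negative summands yields $\ddot\psi<0$ on $]t',t''[$, as claimed.

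The only genuine obstacle is this propagation step: turning $\dot\psi(t')\le 0$ into $\dot\psi<0$ on the full interval. The Gronwall-type integrating-factor argument is what closes it, and it works precisely because the forcing term $\psi\,\varphi''(x)\,\dot x$ keeps a fixed negative sign, which in turn rests on $x$ remaining positive (so $\varphi''=b>0$) and decreasing (so $\dot x<0$) until $x(t'')=0$. Everything else is sign bookkeeping; note that one need not worry about $x(t')=0$, since $x$ is decreasing and positive on the open interval, only $x>0$ on $]t',t''[$ is actually used.
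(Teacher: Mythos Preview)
Your argument is correct and follows the same overall line as the paper: first pin down $u=0$ and $\dot x<0$, then prove $\dot\psi<0$ on $]t',t''[$, and finally read off $\ddot\psi<0$ from (\ref{psiSecondDeriv}) as a sum of two negative terms. The one methodological difference is in the ``propagation'' step for $\dot\psi<0$. The paper does it by a direct comparison straight from (\ref{adjointEq1}): as long as $\psi<\alpha$ and $x$ decreases (so $\varphi'(x)<\varphi'(x(t'))$, both factors positive), one has
\[
\dot\psi(t)=-1+\psi(t)\,\varphi'(x(t))\;<\;-1+\alpha\,\varphi'(x(t'))=\dot\psi(t')\le 0,
\]
which is lighter than your integrating-factor/Gronwall device on the ODE for $\eta=\dot\psi$. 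Your route is perfectly valid and has the virtue of not using the monotonicity of $\varphi'$ on $x>0$ explicitly; the paper's route is shorter.

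One small presentational point: you assert at the outset that the maximality condition forces $u=0$ on all of $]t',t''[$, but (\ref{uOpt}) only gives this where $\psi<\alpha$, which a priori is just a right neighbourhood of $t'$. The clean way to close this (and the paper is equally terse here) is the standard bootstrap: let $\tau\in\,]t',t'']$ be maximal with $\psi<\alpha$ on $]t',\tau[$; on that subinterval $u=0$, $\dot x<0$, and your integrating-factor argument yields $\dot\psi<0$, hence $\psi(\tau)<\psi(t')=\alpha$, forcing $\tau=t''$. With that one sentence inserted, the logic is airtight.
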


\begin{proof}
Obviously, $u = 0$ in the right neighbourhood of $t',$ thus $\dot x < 0$ in the right neighbourhood of $t'$ according to Lemma \ref{xdot}, and $\varphi'(x)$ decreases. In addition, we have $\dot{\psi}(t') = -1 + \alpha \varphi\left(x(t')\right) \leqslant 0.$ Since $\psi(t) < \alpha$ and $\varphi'(x) < \varphi'\left(x(t')\right)$ in the right neighbourhood of $t',$ we get $\dot{\psi}(t) < 0$ in the right neighbourhood of $t',$ thus we get $\psi(t) < \alpha.$ Thus, we get $\psi
(t) < \alpha,$ $u = 0,$ $\dot x < 0,$ $\dot{\psi} < 0$ on $]t',t''[,$ and (\ref{psiSecondDeriv}) implies $\ddot{\psi}(t) < 0$ on $]t', t''[.$ %\ctd
\end{proof}
%}

Note that the transversality condition $\psi(T) = 0$ implies $\dot{\psi}(T) = -1$ and $\ddot{\psi}(T) = -\varphi'\left(x(T)\right) < 0.$

The previous results were formulated for the general case of resistance function and for any extremal trajectory. Now we begin to work with the quadratic function and the trajectory of type III.

Remind that for the trajectory of type III we have

$$
X_C = \left\lbrace x_1, x_2, x_3\right\rbrace,
$$ 
where $x_ 1 \in ]x_{min}, - x^\sim[,$ $x_2 \in ]0, x^\sim[,$ $x_3 \in ]x^\sim, x_{max}[$ (see classification in section 5), and 

$$u=(1,0,1,0)$$ 
on consecutive intervals $]0,t_1[,$ $]t_1,t_2[,$ $]t_2,t_3[,$ $]t_3,T[,$ such that $x(t_1) = x_3,$ $x(t_2) = x_1,$ $x(t_3) = x_2,$ and $\psi(t_1) = \psi(t_2) = \psi(t_3) = \alpha.$
Note that for the quadratic function $\varphi(x)$ we get 

\begin{enumerate}[]
\item
$x^\sim = \frac{1 - k\alpha}{b\alpha}$ as the positive root of the equation $\varphi'(x) = \frac{1}{\alpha}$ and

\item
$x_{min} = \frac{k - \sqrt{k^2 + 2 b g}}{b}$ 
as the negative root of the equation $-\varphi(x) - g = 0,$ which reads as $\frac{b}{2} x^2 - k x - g = 0.$
\end{enumerate}
Along the trajectory of type III $x(t)$ may change its sign as described in the following

\textbf{Scheme I}
\begin{enumerate}[1.]
\item $x(t)$ does not change its sign on $]t_1, t_2[.$ Thus, according to Lemma \ref{ddotPsiNeg}, we get $\dot{\psi}(t) < 0$ and $\ddot{\psi}(t) < 0$ on $]t_1, t_2[,$ thus $\psi(t_2) < \psi(t_1),$ which contradicts $\psi(t_1) = \psi(t_2).$

\item $x(t)$ changes its sign on $]t_1, t_2[,$ i.e. there exists $t^{*}\in ]t_1, t_2[$ such that $x(t^{*}) = 0.$ Thus, the following two situations might be realized.
	\begin{enumerate}[2.1]
		%\item $\ddot{\psi}(t') \leqslant 0,$
		%\item $\ddot{\psi}(t') > 0.$
		\item $x(t)$ does not change its sign on $]t_3, T[.$ 					  Then, according to Lemma \ref{ddotPsiNeg}, we get 				  $\dot{\psi}(t) < 0$ and $\ddot{\psi}(t) < 0$ on 					  $]t_3, T[,$ thus there exists such $T$ that 
			  $\psi(T) = 0.$
		\item $x(t)$ changes its sign on $]t_3, T[,$ i.e. 					  there exists $t^{**}\in ]t_3, T[$ such that $x(t^{**})= 0.$ 
	\end{enumerate}
\end{enumerate}

What is more, the signs of $\ddot{\psi}(t^{*}),$ $\ddot{\psi}(t^{**})$ may be as follows.
\begin{enumerate}[1)]
\item $\ddot{\psi}_{right}(t^{*}) \leqslant 0$ and $\ddot{\psi}_{right}(t^{**}) \leqslant 0,$

\item $\ddot{\psi}_{right}(t^{*}) \leqslant 0$ and $\ddot{\psi}_{right}(t^{**}) > 0,$

\item $\ddot{\psi}_{right}(t^{*}) > 0$ and $\ddot{\psi}_{right}(t^{**}) \leqslant 0,$

\item $\ddot{\psi}_{right}(t^{*}) > 0$ and $\ddot{\psi}_{right}(t^{**}) > 0,$
\end{enumerate}
inducing different behaviour of $\psi(t)$ on $]t^{*}, t_2[$ and $]t^{**}, T[.$ Equation for the second derivative (\ref{psiSecondDeriv}) gives us

\begin{equation}\label{psiDDeriv2}
\ddot{\psi}_{right}(t) = - \varphi(t)' + \psi(t) \left(\varphi'^2\left(x(t)\right) - \varphi_{right}''\left(x(t)\right)\left(\varphi\left(x(t)\right) + g \right)\right).
\end{equation}
The following two facts take place.

\begin{lemma}\label{psiDDerivNeg}
Let $u = 0$ on $]t',t''[$ and $\bar{t} \in ]t', t''[$ be such that $x(t) > 0$ on $]t', \bar{t}[$, $x(\bar{t}) = 0,$ $\dot{\psi}(t) < 0$ in a left neighbourhood of $\bar{t},$ and $\ddot{\psi}_{right}(\bar{t}) \leqslant 0.$ Then $\ddot{\psi}(t) < 0$ on $]\bar{t}, t''[.$
\end{lemma}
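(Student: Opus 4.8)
The plan is to prove the lemma by a barrier (first-crossing) argument for $\ddot\psi$, exploiting that on the whole interval $]\bar t,t''[$ the trajectory sits in the region $x<0$, where the quadratic resistance (\ref{quadrpsi}) has the constant second derivative $\varphi''(x)\equiv -b<0$. First I would record the signs available on $]\bar t,t''[$: since $x(\bar t)=0$ and $u=0$, Lemma~\ref{xdot} gives $\dot x<0$, so $x(t)<0$ there; because $x>x_{min}$ we have $\varphi(x)+g>0$ (whence $\dot x=-(\varphi(x)+g)<0$), while $\varphi'(x)=-bx+k>k>0$, and by Proposition~\ref{psigreaterzero} also $\psi(t)>0$. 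These are the only ingredients the argument needs.

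Next I would compute the third derivative of $\psi$. Differentiating (\ref{psiSecondDeriv}) on $]\bar t,t''[$ and using that for the quadratic $\varphi$ one has $\varphi'''\equiv 0$ and, from $u=0$, $\ddot x=-\varphi'(x)\dot x$, I obtain
$$
\dddot\psi=\ddot\psi\,\varphi'(x)+\varphi''(x)\,\dot x\,\bigl(2\dot\psi-\psi\,\varphi'(x)\bigr).
$$
Because $\varphi''(x)=-b<0$ and $\dot x<0$, the factor $\varphi''(x)\dot x=b\,(\varphi(x)+g)$ is strictly positive throughout the interval.

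The heart of the proof is the behaviour of $\ddot\psi$ at its own zeros. At any $t_0\in]\bar t,t''[$ with $\ddot\psi(t_0)=0$, the relation (\ref{psiSecondDeriv}) forces $\dot\psi(t_0)\,\varphi'(x)=-\psi\,\varphi''(x)\dot x=-\psi\,b\,(\varphi(x)+g)<0$, hence $\dot\psi(t_0)<0$; consequently $2\dot\psi(t_0)-\psi\,\varphi'(x)<0$, and the displayed formula gives $\dddot\psi(t_0)<0$. Thus $\ddot\psi$ is strictly decreasing wherever it vanishes, so it can never pass from negative to positive values. Starting from $\ddot\psi_{right}(\bar t)\le 0$, I would then argue by contradiction: if $\ddot\psi$ became nonnegative somewhere, let $t_0$ be the first point to the right of $\bar t$ where $\ddot\psi$ vanishes; then $\ddot\psi<0$ on $]\bar t,t_0[$ forces $\dddot\psi(t_0)\ge 0$, contradicting $\dddot\psi(t_0)<0$. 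Hence $\ddot\psi(t)<0$ throughout $]\bar t,t''[$.

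The main technical point to handle with care is the endpoint $\bar t$, where $x=0$ and $\varphi''$ jumps from $+b$ to $-b$: the hypothesis is stated with the \emph{right-hand} value $\varphi''_{right}=-b$, matching the region $x<0$ entered for $t>\bar t$, and the same one-sided computation shows that if $\ddot\psi_{right}(\bar t)=0$ then already $\dot\psi(\bar t)<0$ and $\dddot\psi_{right}(\bar t)<0$, so $\ddot\psi$ drops strictly below zero immediately and the first-crossing argument applies on $]\bar t+\varepsilon,t''[$. (The standing assumption $\dot\psi<0$ in a left neighbourhood of $\bar t$ is consistent with this and, together with $\ddot\psi<0$, yields $\dot\psi<0$ on all of $]\bar t,t''[$ as a by-product.) I expect the only real subtlety to be this matching of one-sided derivatives at the sign change of $x$; the sign bookkeeping in the interior is routine once the positivity of $\psi$, $\varphi'(x)$ and $\varphi(x)+g$ is in place.
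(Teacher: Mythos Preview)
Your argument is correct and, in fact, cleaner than the paper's. The key difference is the tool used to rule out a first zero $t_0$ of $\ddot\psi$ on $]\bar t,t''[$. The paper works with the explicit expression (\ref{psiDDeriv2}) for $\ddot\psi$ in terms of $\psi$ and $x$: at a putative first zero it solves $\ddot\psi(\widehat t)=0$ for $\psi(\widehat t)$ and then checks, by a quadratic-specific algebraic inequality (reducing to $k\widehat x^{2}+2g\widehat x<0$ on $]x_{min},0[$ together with $x_{min}>-g/k$), that this value exceeds $\frac{k}{k^2+bg}=\psi(\bar t)$, contradicting $\dot\psi<0$ on $]\bar t,\widehat t[$; the boundary case $\ddot\psi_{right}(\bar t)=0$ is then handled separately by another explicit estimate in the variable $z=-(bx-k)$.

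You instead differentiate once more and use only signs: with $\varphi'''\equiv 0$ and $\ddot x=-\varphi'(x)\dot x$ on the $u=0$ arc you obtain $\dddot\psi=\ddot\psi\,\varphi'(x)+\varphi''(x)\dot x\,(2\dot\psi-\psi\varphi'(x))$, and at any zero of $\ddot\psi$ the relation (\ref{psiSecondDeriv}) forces $\dot\psi<0$, whence $\dddot\psi<0$ since $\varphi''(x)\dot x=b(\varphi(x)+g)>0$ on $]\bar t,t''[$. This yields the same first-crossing contradiction without any of the paper's algebra, and also disposes of the endpoint case $\ddot\psi_{right}(\bar t)=0$ in one line. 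The only structural input you need from the quadratic model is $\varphi'''\equiv 0$ on $x<0$; the rest is the sign information $\psi>0$, $\varphi'>0$, $\varphi''<0$, $\dot x<0$, $\varphi(x)+g>0$, all of which you identify correctly.
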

\begin{proof}
In the quadratic case, inequality $\ddot{\psi}_{right}(\bar{t}) \leqslant 0$ reads 
$$
\psi(\bar{t}) \leqslant \frac{k}{k^2 + bg}.
$$
Consider the following two cases.

If $\ddot{\psi}_{right}(\bar{t}) < 0,$ i.e. $\psi(\bar{t}) < \frac{k}{k^2 + bg},$ then, according to (\ref{psiDDeriv2}), there exists $\widehat{t}$ such that $\ddot{\psi}(\widehat{t}) = 0,$ i.e.

$$
\psi(\widehat{t}) = \frac{\varphi'(\widehat{x})}{\varphi'^2(\widehat{x}) - \varphi''(\widehat{x}) \left(\varphi(\widehat{x}) + g\right)},\quad \mbox{where}\quad \widehat{x} = x(\widehat{t}),
$$
and $\ddot{\psi}(t) < 0$ on $]\bar{t},\widehat{t}[$ (thus, $\dot{\psi} < 0$ on $]\bar{t},\widehat{t}[$). 
Check that for the quadratic case $\psi(\widehat{t}) > \frac{k}{k^2 + bg},$ i.e. we have a contradiction with $\dot{\psi}(t) < 0$ on $]\bar{t},\widehat{t}[.$
This is equivalent to check that

\begin{equation}\label{ineq1}
\frac{-b \widehat{x} + k}{ (-b \widehat{x} + k)^2 + b (-\frac{b}{2} \widehat{x}^2 + k v{x} + g)} > \frac{k}{k^2 + bg}
\end{equation}
for all $\widehat{x} \in ]x_{min}, 0[,$ which is equivalent to

\begin{equation}\label{quadrEq}
A(x) := k \widehat{x}^2 + 2 g \widehat{x} < 0
\end{equation}
for all $\widehat{x} \in ]x_{min},0[.$

Note that $x(t)\in ]x_{min}, x_{max}[$ for all $t$ according to Lemma \ref{xdot}. Thus, 

$$
x_{min} < \widehat{x} < 0.
$$

Obviously, $A(x) < 0$ for all $\widehat{x} \in \left]-\frac{g}{k},0\right[.$ One can easily check that $x_{min} > - \frac{g}{k},$ so (\ref{quadrEq}) and then (\ref{ineq1}) hold, a contradiction. Thus, $\ddot{\psi}(t) < 0$ on $]\bar{t}, t''[.$ 

If $\ddot{\psi}_{right}(\bar{t}) = 0,$ i.e. $\psi(\bar{t}) = \frac{k}{k^2 + bg},$ we write 

$$
\ddot{\psi}(t) = -z + \frac{1}{2}\psi \left(z^2 + k^2 + 2 bg\right),
$$
where $z = - (b x - k)$ (one can easily check it by substituting $z$ to (\ref{psiDDeriv2})).

Since $\dot \psi(t) < 0$ in the right neighbourhood of $\bar{t},$ we get

$$
\ddot{\psi}(t) \leqslant -z + \frac{1}{2} \frac{k}{k^2 + bg} \left( z^2 + k^2 + 2 b g \right)
$$
in this right neighbourhood of $\hat{t},$ where
$z > k$ and increases  (since $x < 0$ and decreases).
This case may be reduced to the previous one. To do this, compare
$$
-z + \frac{1}{2} \frac{k}{k^2 + bg} \left( z^2 + k^2 + 2 b g \right)\quad\mbox{and}\quad 0,
$$
which is equivalent to comparison of
$$
A_1(z) :=k z^2 - 2 (k^2 + bg) z + k^3 + 2 k b g \quad\mbox{and}\quad 0
$$
for $z > k.$

One can easily find the roots

$$
z_1 = k, \quad z_2 = \frac{k^2 + 2 b g}{k} \quad\mbox{of}\quad A_1(z),
$$
thus $A_1(z) < 0$ in the corresponding right neighbourhood of $k,$ which implies $\ddot{\psi}(t) < 0$ in the right neighbourhood of $\hat{t},$ i.e. we reduced the case to the previous one and can use the corresponding proof.
%\ctd
\end{proof}
\ms
\begin{lemma}\label{psiDDerivPos}
Let $u = 0$ on $]t',t''[$ and $\bar{t} \in ]t',t''[$ be such that $x(t) > 0$ on $]t',\hat{t}[,$ $x(\bar{t}) = 0,$ $\dot{\psi}(t) < 0$ in a left neighbourhood of $\bar{t},$ and $\ddot{\psi}_{right}(\bar{t}) > 0.$ Then $\ddot{\psi}(t) > 0$ on $]\bar{t}, t''[.$
\end{lemma}
\begin{proof}
In the quadratic case inequality $\ddot{\psi}_{right}(\bar{t}) > 0$ reads 
$$
\psi(\bar{t}) > \frac{k}{k^2 + bg},
$$
and, according to (\ref{psiDDeriv2}), $\ddot{\psi}(t) > 0$ on $]\bar{t},\widehat{t}[,$ where $\ddot{\psi}(\widehat{t}) = 0,$ i.e.

$$
\psi(\widehat{t}) = \frac{\varphi'(\widehat{x})}{\varphi'^2(\widehat{x}) - \varphi''(\widehat{x}) \left(\varphi(\widehat{x}) + g\right)},\quad \mbox{where}\quad \widehat{x} = x(\widehat{t}).
$$

Let $t'''$ be such that $\dot{\psi}(t''') = 0.$ If $\widehat{t} > t''',$ then (\ref{psiDDeriv2}) gives $\ddot{\psi}(t) > 0$ and increases, a contradiction with $\ddot{\psi}(\widehat{t}) = 0.$ Thus, $\widehat{t} \leqslant t''',$ i.e. $\dot{\psi}(t) < 0$ on $]\bar{t},\widehat{t}[.$ and $\psi(\widehat{t}) < \psi(\bar{t})$.

Analogically to Lemma \ref{psiDDerivNeg} we check that $\psi(\widehat{t}) > \frac{k}{k^2 + bg},$ i.e. we have a contradiction with $\dot{\psi}(t) < 0$ on $]\bar{t},\widehat{t}[.$ Thus, $\ddot{\psi}(t) > 0$ on $]\bar{t}, t''[.$ 
%\ctd
\end{proof}

Show that for the trajectory of type III only cases from the following scheme are realized.

\textbf{Scheme II}
\begin{enumerate}[1.]
\item $x(t)$ does not change its sign on $]t_1,t_2[,$
\item $x(t)$ changes its sign on $]t_1,t_2[$ and $\ddot{\psi}(t^{*})\leqslant 0,$ i.e.
\begin{equation}\label{psitreshold1}
\psi(t^{*}) \leqslant \frac{k}{k^2 + bg},
\end{equation}
\item $x(t)$ changes its sign on both of $]t_1,t_2[$ and $]t_3,T[,$ and $\ddot{\psi}(t^{*}) > 0,$ $\ddot{\psi}(t^{**}) > 0,$
\begin{equation}\label{psitreshold2}
\psi(t^{*}) > \frac{k}{k^2 + bg}, \quad \psi(t^{**}) > \frac{k}{k^2 + bg}.
\end{equation}
\end{enumerate}
Then case 1 leads to a contradiction with $\psi(t_2) = \psi(t_1)$ according to Lemma \ref{ddotPsiNeg}, case 2 leads to a contradiction with $\psi(t_2) = \psi(t_1)$ according to Lemma \ref{psiDDerivNeg} and case 3 leads to a contradiction with $\ddot{\psi}(T) < 0$ according to Lemma \ref{psiDDerivPos}.

Consider intervals $]t_1, t^{*}[$ and $]t_3,t^{**}[.$
Since $x(t) > 0$ on $]t_1, t^{*}[ \cup ]t_3,t^{**}[,$ we get

$$
\dot \psi(t) = - 1 + \psi(t) \left(b x(t) + k\right),
$$

$$
\dot x(t) = - \frac{b}{2}x^2(t) - k x(t) - g
$$
on $]t_1, t^{*}[ \cup ]t_3,t^{**}.[$
Thus,

$$
\frac{d \psi}{d x} = \frac{1 - \psi(x) (bx + k)}{\frac{b}{2}x^2 + k x + g},
$$
and, denoting $b x + k = z,$ we get

\begin{equation}\label{psiQuadrEq}
\begin{cases}
\begin{aligned}
\frac{d \psi}{d z} &= 2 \frac{1 - \psi(z) z}{2 b g - k^2 + z^2},\\
\psi(z_0) &= \alpha,
\end{aligned}
\end{cases}
\end{equation}
where $z_0 = b x_0 + k,$ $x_0$ is either $x(t_1)$ or $x(t_3).$

Writing (\ref{psiQuadrEq}) in form

$$\frac{d \psi}{d z} + \frac{2 z}{2 b g - k^2 + z^2} \psi = \frac{2}{2 b g - k^2 + z^2},
$$
it is easy to obtain
\begin{equation}\label{psiEquationQuadr}
\psi(z) = \frac{\alpha(2bg - k^2 + z_0^2) + 2(z - z_1)}{2bg - k^2 + z^2},
\end{equation}
In the next subsection we use formula (\ref{psiEquationQuadr}) to prove the inoptimality of the trajectory of type III.
\subsection{Inoptimality of trajectory of type III}
%First we check that $$
Following \textbf{Scheme II}, we substitute $z(0) = k$ to (\ref{psiEquationQuadr}), and compare

\begin{equation}\label{psiKIneq}
\psi(k) = \frac{\alpha (2 b g - k^2 + z_0^2) + 2 (k - z_0)}{2 b g}\quad\mbox{and}\quad0.
\end{equation}
Consider the following two cases. 

\begin{enumerate}
\item $2bg \geqslant k ^2$ 

Since $z(0) = k,$ we denote $2 b g - k^2 := h^2$ and rewrite (\ref{psiKIneq}) as comparison of

$$
\frac{\alpha (h^2 + z_0^2) + 2 (k - z_0)}{2 b g}\quad\mbox{and}\quad0,
$$

which is equivalent to a comparison of

$$
B(z_0) := \alpha z_0^2 - 2 z_0 + 2 k + \alpha h^2\quad\mbox{and}\quad0
$$

for $z_0$ taken equal to $z_2 := b x_2 + k$ and $z_3 := b x_3 + k.$

Discriminant of quadratic function $B(z_0)$ is equal to

$$
D_B(\alpha) =  4 \left( 1 - h^2 \alpha^2 - 2 k \alpha \right)
$$

with the roots

$$
\alpha_1 = \frac{-k - \sqrt{k^2 + h^2}}{h^2}, \qquad \alpha_2 = \frac{-k + \sqrt{k^2 + h^2}}{h^2}.
$$

One can easily check that $0 < \alpha_2 < \frac{1}{k},$ thus we have either $D_B(\alpha) < 0$ for all considered $\alpha \in ]0, 1/k[$ (thus, $B(z_0) > 0$ for all $z_0$ and $x(t)$ changes its sign on both of $]t_1, t_2[$ and $]t_3,T[$), or $B(z_0)$ has two roots

$$
z_{min} = \frac{1 - \sqrt{1 - h^2 \alpha^2 - 2 k \alpha}}{\alpha}, \qquad z_{max} = \frac{1 + \sqrt{1 - h^2 \alpha^2 - 2 k \alpha}}{\alpha}.  
$$

We do not consider the case $D_B(\alpha) = 0,$ since in this case $B(z_0) > 0$ for all $z_0 \neq z^\sim := \frac{1}{\alpha},$ and the root $z^\sim$ corresponds to the singular arc (see classification in section 5), while we work with a boundary arcs$]t_1, t_2[$ and $]t_3,T[,$ where $u = 0.$

\begin{figure}[h]\label{ASigns}
\begin{minipage}[h]{0.48\linewidth}
\center{$\includegraphics[width=\linewidth]{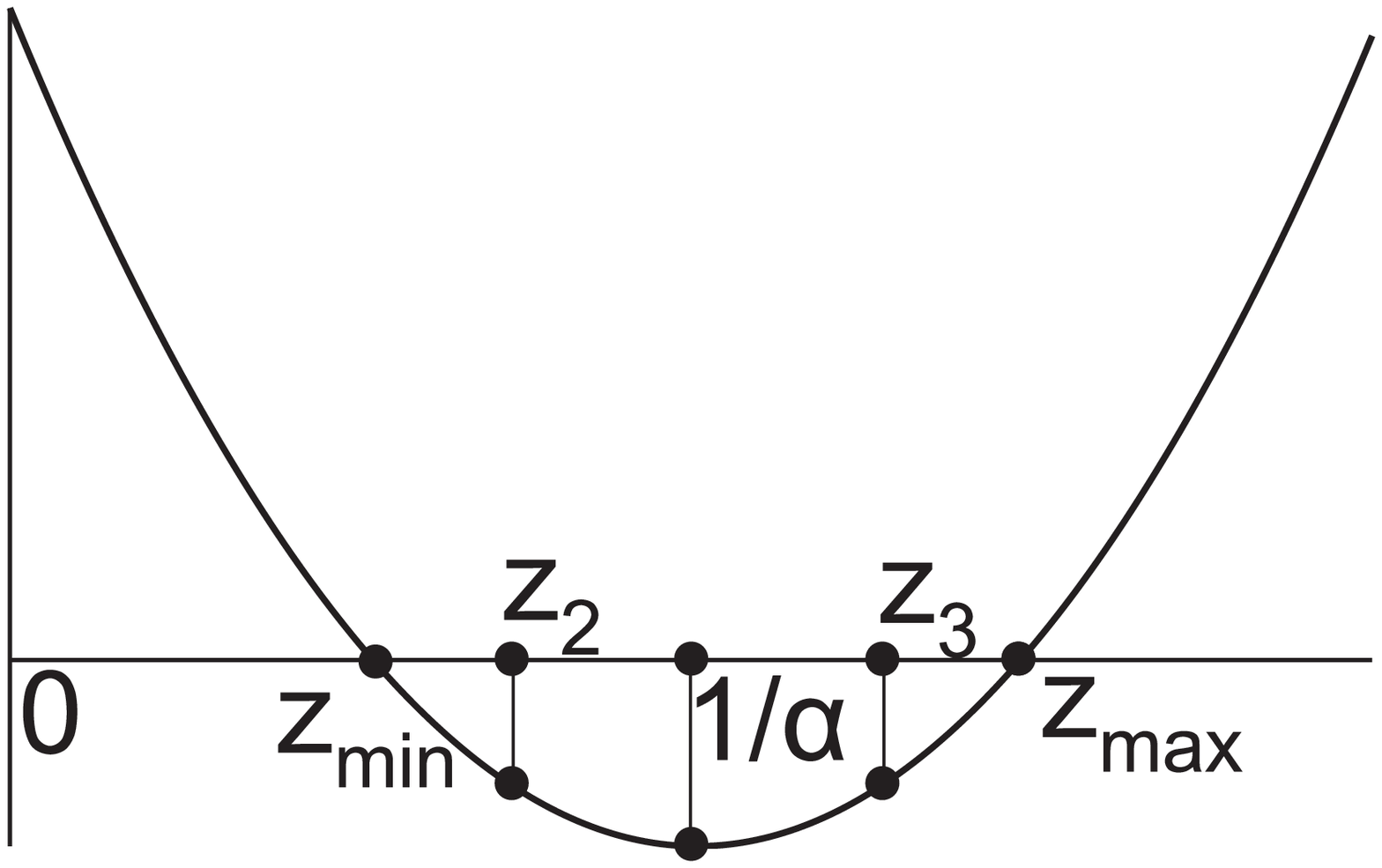}$
\center{a: $B(z_2) \leqslant 0,$ $B(z_3) \leqslant 0$}
}
\end{minipage}
\hfill
\begin{minipage}[h]{0.48\linewidth}
\center{$\includegraphics[width=\linewidth]{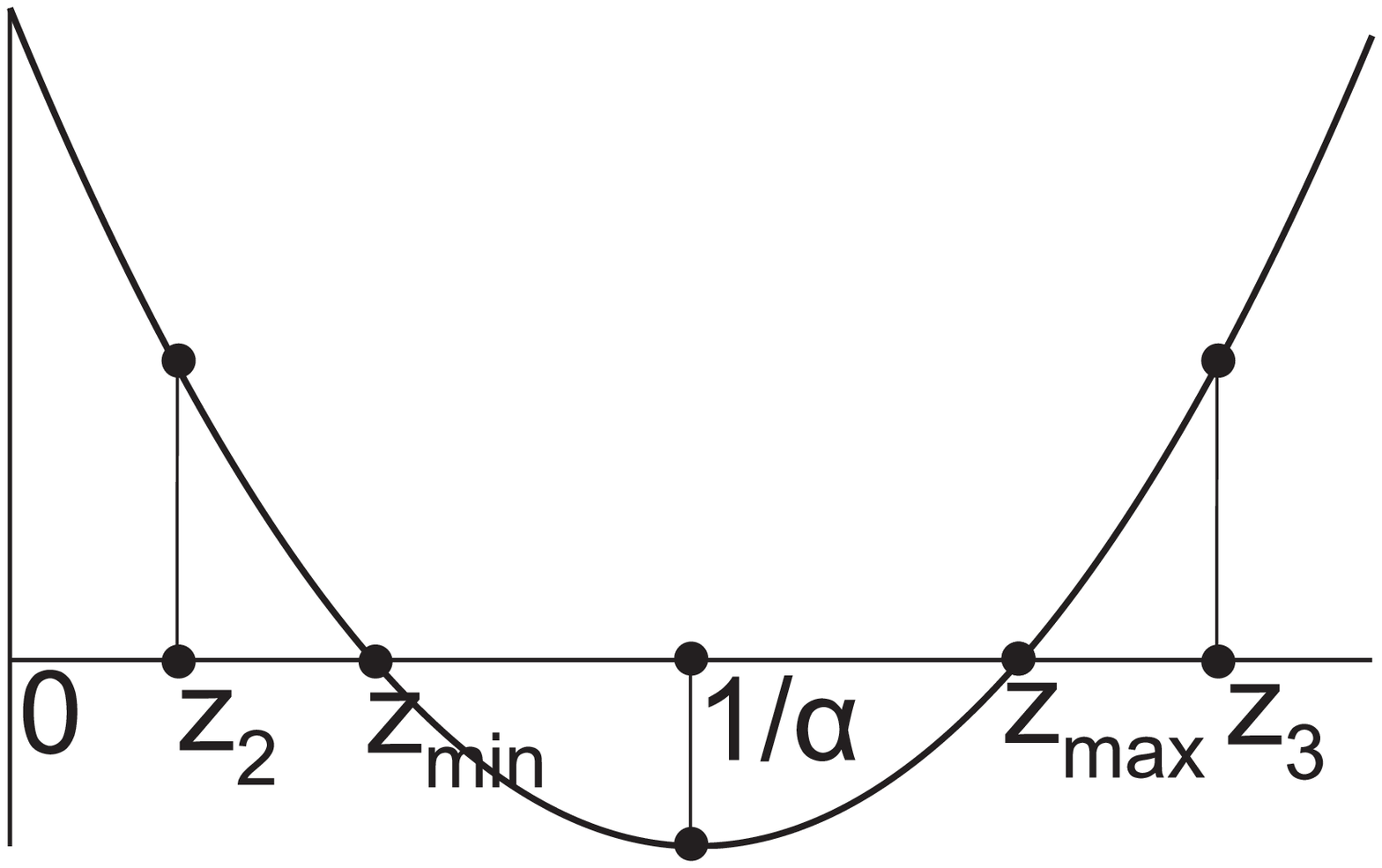}$}
\center{b: $B(z_2) > 0,$ $B(z_3) > 0$}
\end{minipage}
\caption{Graph of $B(z_0)$ and $C(z_0)$ (both of them are quadratic functions)}
\end{figure}

Since $x_2, x_3$ are two positive roots of the equation

$$
\Phi(x) - c :=\frac{b}{2} x^2 + x\frac{k\alpha - 1}{\alpha} - c = 0,
$$

and they are symmetric w.r.t. $x^\sim,$ the corresponding $z_2 = b x_2 + k$ and $z_3 = b x_3 + k$ are symmetric w.r.t. $z^\sim = b x^\sim + k =  \frac{1}{
\alpha},$ which means that $B(z_1)$ and $B(z_2)$ have the same sign (see Fig. 15) and for every collection of parameters such that $2 b g \geqslant k^2$ we get either the case 1 of the \textbf{Scheme II} (thus, trajectory of type III does not satisfy the MP), or the case where we have to compare

$$
\psi(k) := \frac{\alpha(2bg - k^2 + z_0^2) + 2 (k - z_0)}{2bg}\quad\mbox{and}\quad\frac{k}{k^2 + bg}.
$$

for $z_0$ taken equal to $z_2 := b x_2 + k$ and $z_3 := b x_3 + k.$

Denoting $b g + k^2 := c^2,$ this is equivalent to comparison of

\begin{equation}\label{eq}
C(z_0):= c^2 \alpha z_0^2 - 2 c^2 z_0 + \alpha h^2 c^2 + 2 k^3 \quad\mbox{and}\quad 0.
\end{equation}

Discriminant of the quadratic function $C(z_0)$ is equal to 

$$
D_C(\alpha)= 4 \left(- h^2 c^4 \alpha^2 - 2 k^3 c^2 \alpha + c^4 \right)
$$

with the roots

$$
\alpha_1 = \frac{-k^3 - \sqrt{k^6 + h^2 c^4}}{h^2 c^2},\quad
\alpha_2 = \frac{-k^3 + \sqrt{k^6 + h^2 c^4}}{h^2 c^2}.
$$

One can easily check that

$$
\alpha_1 < 0 < \alpha_2 < \frac{1}{k} ,
$$

thus either $D_C(\alpha) < 0$ and $C(z_0) > 0$ for all $z_0,$ or $C(z_0)$ has two roots

\begin{equation*}
%\begin{aligned}
z_{min} = \frac{1 - \sqrt{-h^2 \alpha^2 - 2 \frac{k^3}{c^2} \alpha + 1}}{\alpha},\quad
z_{max} = \frac{1 + \sqrt{-h^2 \alpha^2 - 2 \frac{k^3}{c^2} \alpha + 1}}{\alpha}
%\end{aligned}
\end{equation*}

symmetric to $z^\sim = b x^\sim + k.$ The first situation induces the case 3 of \textbf{Scheme II}, thus trajectory of type III does not satisfy the maximum principle. In the second case, since $z_2 = b x_2 + k$ and $z_3 = b x_3 + k,$ we get $C(z_2)$ and $C(z_3)$ have the same sign (see Fig. 15). Thus, either the case 2 or the case 3 of    \textbf{Scheme II} is realized, thus the trajectory of type III does not satisfy the MP. As before, since the arcs $]t_1, t_2[$ and $]t_3,T[$ are not singular, we do not consider the case $D_C(\alpha) = 0.$

%Thus, trajectory of type III does not satisfy the MP. 
\ms
\item $2bg < k ^2$ 

Analogically to the previous case, since $z(0) = k,$ we denote $2 b g - k^2 := - h^2$ and сompare (see (\ref{psiKIneq}))

$$
\frac{\alpha (- h^2 + z_0^2) + 2 (k - z_0)}{2 b g}\quad\mbox{and}\quad0,
$$

which is equal to the comparison of

$$
B(z_0) := \alpha z_0^2 - 2 z_0 + 2 k - \alpha h^2\quad\mbox{and}\quad0
$$

for $z_0$ taken equal to $z_2 := b x_2 + k$ and $z_3 = b x_3 + k.$

Discriminant of the quadratic function $B(z_0)$ is equal to

$$
D_B(\alpha)= 4 \left(1 + h^2 \alpha^2 - 2 k \alpha\right)
$$

with the roots

$$
\alpha_1 = \frac{k - \sqrt{k^2 - h^2}}{h^2}, \quad \alpha_2 = \frac{k + \sqrt{k^2 - h^2}}{h^2}.
$$

One can easily check that $0 < \alpha_1 < \frac{1}{k} < \alpha_2,$ thus either $D_B(z_0) < 0$ (thus, $B(z_0) > 0$ for all $z_0$ and $x(t)$ changes its sign on both of $]t_1,t_2[$ and $]t_3,T[$), or $B(z_0)$ has two roots

$$
z_{min} = \frac{1 - \sqrt{1 + h^2 \alpha^2 - 2 k \alpha}}{\alpha}, \quad z_{max} = \frac{1 + \sqrt{1 + h^2 \alpha^2 - 2 k \alpha}}{\alpha}.  
$$
As before, since the arcs $]t_1, t_2[$ and $]t_3,T[$ are not singular, we do not consider the case $D_B(\alpha) = 0.$

Since $x_2, x_3$ are two positive roots of the equation

$$
\Phi(x) - c :=\frac{b}{2} x^2 + x\frac{k\alpha - 1}{\alpha} - c = 0,
$$

and they are symmetric w.r.t. $x^\sim,$ the corresponding $z_2 = b x_2 + k$ and $z_3 = b x_3 + k$ are symmetric w.r.t. $z^\sim = b x^\sim + k =  \frac{1}{
\alpha},$ which means that $B(z_1)$ and $B(z_2)$ have the same sign (see Fig. 16) and for every collection of parameters such that $2 b g < k^2$ we get either the case 1 of \textbf{Scheme II} (thus, trajectory of type III does not satisfy the MP), or the case where we have to compare
$$
\frac{\alpha(2bg - k^2 + z_0^2) + 2(k - z_0)}{2bg} \quad\mbox{and}\quad \frac{k}{k^2 + bg}.
$$
for $z_0$ taken equal to $z_2 := b x_2 + k$ and $z_3 = b x_3 + k.$

Denoting $2 b g - k^2 := - h^2,$ $h \neq 0$ and $b g + k^2 := c^2,$ we obtain the equivalent comparison of

\begin{equation}\label{eq1}
C(z_0) := c^2 \alpha z_0^2 - 2 c^2 z_0 - \alpha h^2 c^2 + 2 k^3\quad\mbox{and}\quad 0.
\end{equation}

Discriminant of the quadratic function $C(z_0)$ is equal to

$$
D_C(\alpha) = 4 \left(h^2 c^4 \alpha^2 - 2 k^3 c^2 \alpha + c^4\right)
$$

with the roots

$$
\alpha_1 = \frac{k^3 - \sqrt{k^6 - h^2 c^4}}{h^2 c^2},\quad
\alpha_2 = \frac{k^3 + \sqrt{k^6 - h^2 c^4}}{h^2 c^2}.
$$

One can easily check that

$$
0 < \alpha_1 < \frac{1}{k} < \alpha_2,
$$

thus either $D_C(\alpha) < 0$ and $C(z_0) > 0$ for all $z_0,$ or $C(z_0)$ has two roots

\begin{equation*}
%\begin{aligned}
z_{min} = \frac{1 - \sqrt{-h^2 \alpha^2 - 2 \frac{k^3}{c^2} \alpha + 1}}{\alpha},\quad
z_{max} = \frac{1 + \sqrt{-h^2 \alpha^2 - 2 \frac{k^3}{c^2} \alpha + 1}}{\alpha}
%\end{aligned}
\end{equation*}

symmetric to $z^\sim = b x^\sim + k.$ The first situation induces the case 3 of \textbf{Scheme II}, thus the trajectory of type III does not satisfy the maximum principle. In the second case, since $z_2 = b x_2 + k$ and $z_3 = b x_3 + k,$ we get $C(z_2)$ and $C(z_3)$ have the same sign (see Fig. 15). Thus, either the case 2 or the case 3 of the \textbf{Scheme II} is realized, thus the trajectory of type III does not satisfy the MP.

As before, since arcs $]t_1, t_2[$ and $]t_3,T[$ are not singular, we do not consider the case $D(\alpha) = 0.$

\end{enumerate}
Summing up, we obtain the following result.

\begin{theorem}\label{Traj_III_inopt}
If the  media resistance function has the quadratic form (\ref{quadrpsi}), then the trajectories of type III do not satisfy the MP.
\end{theorem}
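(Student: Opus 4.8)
The plan is to suppose, for contradiction, that a trajectory of type III satisfies the MP, and to trace the adjoint variable $\psi$ along its two zero-thrust arcs $]t_1,t_2[$ and $]t_3,T[$ until it reaches a contradiction already prepared by the preceding lemmas. Recall that on these arcs $u=0$, that they start from $\psi=\alpha$ with $x(t_1)=x_3>0$ and $x(t_3)=x_2>0$ respectively, and that $x_2,x_3$ are the two positive roots of $\Phi(x)=c$, hence symmetric about $x^\sim$. First I would integrate the adjoint equation along these arcs in the variable $z=bx+k$, which yields the explicit rational expression (\ref{psiEquationQuadr}) for $\psi(z)$ and lets me evaluate $\psi$ at the instant $x=0$, i.e. at $z=k$.

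The second step is to classify, following \textbf{Scheme I}, how $x$ can change sign on each arc, and to collapse this classification to the three surviving possibilities of \textbf{Scheme II}. Here the $\ddot\psi$-lemmas do the work: if $x$ keeps its sign on $]t_1,t_2[$, then Lemma \ref{ddotPsiNeg} forces $\dot\psi<0$ and $\ddot\psi<0$ throughout, so $\psi(t_2)<\psi(t_1)$, contradicting $\psi(t_1)=\psi(t_2)=\alpha$. If $x$ crosses zero, the decisive quantity is the value of $\psi$ at the crossing compared with the threshold $\frac{k}{k^2+bg}$: by Lemma \ref{psiDDerivNeg} a value not exceeding the threshold keeps $\ddot\psi<0$ afterwards (again $\psi(t_2)<\psi(t_1)$), while by Lemma \ref{psiDDerivPos} a value above the threshold keeps $\ddot\psi>0$ afterwards, which on the final arc clashes with $\ddot\psi(T)=-\varphi'(x(T))<0$ coming from $\psi(T)=0$.

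The heart of the argument, and the step I expect to be the main obstacle, is to show that the two arcs are forced into the \emph{same} case of \textbf{Scheme II}, so that no escape route remains. For this I would substitute $z=k$ into (\ref{psiEquationQuadr}) and reduce the comparison of $\psi(k)$ with $0$ (respectively with $\frac{k}{k^2+bg}$) to the sign of a quadratic $B(z_0)$ (respectively $C(z_0)$) in the initial value $z_0\in\{z_2,z_3\}$, where $z_i=bx_i+k$. The key observation is that both $B$ and $C$ have their axis of symmetry at $z=1/\alpha=z^\sim$, while the symmetry of $x_2,x_3$ about $x^\sim$ makes $z_2,z_3$ symmetric about $z^\sim$; hence $B(z_2)=B(z_3)$ and $C(z_2)=C(z_3)$, so the two arcs behave identically. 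Locating the roots of the discriminants $D_B(\alpha),D_C(\alpha)$ on $]0,1/k[$ (the routine computational part) then pins down the common sign and shows that whatever configuration occurs is one of cases 1--3 of \textbf{Scheme II}.

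Finally, I would assemble these pieces: each of the three cases of \textbf{Scheme II} has already been matched to a contradiction (with $\psi(t_1)=\psi(t_2)$ for cases 1 and 2, and with $\ddot\psi(T)<0$ for case 3). Since the symmetry forces exactly one of these cases to hold on both arcs simultaneously, the assumed type III extremal cannot exist, which proves the theorem. The only genuinely delicate point is the synchronization of the two arcs through their common axis of symmetry $z=1/\alpha$; everything else is the bookkeeping of the signs of the quadratics $B$ and $C$ and of their discriminants.
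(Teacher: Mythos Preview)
Your proposal is correct and follows essentially the same route as the paper: integrate the adjoint along the two $u=0$ arcs in the variable $z=bx+k$ via (\ref{psiEquationQuadr}), compare $\psi(k)$ with $0$ and with $\frac{k}{k^2+bg}$ to reduce to the signs of the quadratics $B(z_0)$ and $C(z_0)$, and use the symmetry of $z_2,z_3$ about $z^\sim=1/\alpha$ (which is also the axis of $B$ and $C$) to force both arcs into the same case of \textbf{Scheme II}, each of which is then killed by Lemmas \ref{ddotPsiNeg}, \ref{psiDDerivNeg}, \ref{psiDDerivPos}. Your observation that in fact $B(z_2)=B(z_3)$ and $C(z_2)=C(z_3)$ (not merely ``same sign'') is a clean way to state the synchronization, and the discriminant analysis you allude to is exactly what the paper carries out, splitting into the cases $2bg\ge k^2$ and $2bg<k^2$.
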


Thus, for the quadratic function $\varphi(x)$ we have to investigate only trajectories of types Ia, Ib, IIa, IIb.

\section{Conclusion}

We considered a version of the Goddard problem with a flat and constant gravity
field and obtained all types of Pontryagin extremals. Trajectory of every
type contains no more than four arcs (i.e. there are no more than three switching points) corresponding to the control values $u = 0,$
$u = 1$ or singular control $u = u_{sing}\,.$ Moreover, $x(t) = const$
along singular arc, i.e. the flight height changes in linear way.
For every type of trajectory we obtained formulas for switching times and the value of singular control. If $x(t) \geqslant 0$ along the whole extremal, then the optimal trajectory is either of type Ia or of type Ib and delivers a global maximum among all admissible processes with nonnegative velocity component. In the case of quadratic resistance function $\varphi(x),$ we proved that the trajectory of type III (with four boundary arcs) does not satisfy the MP. In the general case, the question about optimality of extremals, as well as the question about their uniqueness is still open. The question about the dependence of the optimal trajectory on the parameters
of the problem is also very interesting. These questions can be
subjects of further investigations of problem (\ref{probl}) and its modifications.

\end{document}